\newtheorem{theorem}{Theorem}[section]
\newtheorem{lemma}[theorem]{Lemma}
\theoremstyle{definition}
\numberwithin{equation}{section}
\newcommand{\eps}{\varepsilon}
\newcommand{\calL}{\mathcal{L}}
\newcommand{\calF}{\mathcal{F}}
\newcommand{\calD}{\mathcal{D}}
\newcommand{\calT}{\mathcal{T}}
\newcommand{\calS}{\mathcal{S}}
\newcommand{\calC}{\mathcal{C}}
\newcommand{\calE}{\mathcal{E}}
\newcommand{\calB}{\mathcal{B}}
\newcommand{\calX}{\mathcal{X}}
\newcommand{\calY}{\mathcal{Y}}
\newcommand{\calU}{\mathcal{U}}
\newcommand{\fX}{\mathbf{X}}
\newcommand{\fY}{\mathbf{Y}}
\newcommand{\fB}{\mathbf{B}}
\newcommand{\bK}{\mathbf{K}}
\newcommand{\bM}{\mathbf{M}}
\newcommand{\bN}{\mathbf{N}}
\newcommand{\bR}{\mathbf{R}}
\newcommand{\bT}{\mathbf{T}}
\newcommand{\E}{\operatorname{\mathds{E}}} 
\renewcommand{\P}{\operatorname{\mathds{P}}} 
\newcommand{\R}{\mathds{R}}
\newcommand{\Z}{{\mathbb Z}}
\newcommand{\vphi}{\varphi}
\newcommand{\prt}{\partial}
\newcommand{\bL}{\mathbf{L}}
\newcommand{\bS}{\mathbf{S}}
\newcommand{\bA}{\mathbf{A}}
\newcommand{\bDelta}{\mathbf{\Delta}}
\newcommand{\bPi}{\mathbf{\Pi}}
\newcommand{\ol}{\overline}
\newcommand{\wh}{\widehat}
\newcommand{\wt}{\widetilde}
\DeclareMathOperator{\dist}{dist}
\DeclareMathOperator{\Var}{Var}
\def\bone{{\bf 1}}
\def\n{{\bf n}}
\def\be{{\bf e}}
\title{Stirring two grains of sand}
\author{Krzysztof Burdzy}
\address{KB: Department of Mathematics, Box 354350, University of Washington, Seattle, WA 98195}
\email{burdzy@math.washington.edu}
\thanks{Research supported in part by NSF Grant DMS-1206276. }
\begin{document}

\begin{abstract}

Consider two unit balls in a $d$-dimensional flat torus with edge length $r$, for $d\geq 2$. The balls do not move by themselves but they are pushed by a Brownian motion. The balls never intersect---they reflect if they touch. It is proved that the joint distribution of the processes representing the centers of the balls converges to the distribution of two independent Brownian motions when $r\to \infty$, assuming that we use a proper clock and proper scaling. The diffusion coefficient of the limit process depends on the dimension. The positions of the balls are asymptotically independent also in the following  sense. The rescaled stationary distributions of the centers of the balls converge to the product of the stationary (hence uniform) distributions for each ball separately, as $r\to\infty$.
\end{abstract}

\maketitle

\section{Introduction}\label{intro}

The word ``stirring'' in the title of this paper refers to a random change in a system of many bodies that is caused by a single agent that moves continuously and acts locally. This is in contrast to those stochastic flows where different parts of the moving medium are simultaneously ``pushed'' by different (although possibly correlated) random noises.
In everyday life, stirring typically refers to activities such as stirring coffee in a cup or stirring paint in a bucket. In these situations, stirring the medium with a spoon or a stick causes the bulk of the liquid to move (in a circular fashion). Our model is closer to stirring sand in a sandbox with a stick. In this situation, sand grains are displaced locally and there is no overall motion of the bulk of the sand mass. 

Stirring sand in a sandbox provided motivation for this project but our model is a simplification of the reality in (at least) two significant ways. First, we will consider only two ``sand grains'' represented by balls. This seems to be the crucial step in the analysis of the motion of many ``sand grains'' (see the remarks on \cite{BCP} below). 
Second, the stirring agent will be represented by an infinitely small particle performing Brownian motion. One may consider our results as a first step towards a more realistic model.

In our model, the stirring agent, represented by Brownian motion, is not affected by the motion of ``sand grains.'' The two sand grains (balls) remain motionless except when they are pushed by the Brownian particle aside, when its trajectory hits their surfaces.

The problem that we will investigate is that of the evolution of the vector between the centers of the two balls. It is natural to guess that the motion of a each ball should be similar to that of Brownian motion on the local time scale. The crux of the problem is that the directions of the push that the balls receive from the Brownian particle are not independent. Therefore, even if the guess about the motion of a single ball is correct, that does not immediately imply that the limit distribution for the pair of the balls is a pair of independent Brownian motions. 
We will prove that this is in fact true and we will express this idea in two different ways, to be described below.
The main technical challenge of the paper is to estimate the magnitude of the dependence between motions of the two balls. 

We will separately prove the invariance principle for a single ball pushed by Brownian motion in dimension 2 in the whole space $\R^2$. This is meaningful because Brownian motion is recurrent in two dimensions so it will keep pushing the ball forever. We consider this simplified question separately to present a more or less straightforward proof. Many technical details obscure this part of the argument in the case of two balls or in higher dimensions.

 In dimensions 3 and higher, the two balls and Brownian motion will be located in a torus because Brownian motion is transient in these dimensions (but the theorem will cover the two dimensional case as well). First, we will prove an  invariance principle on the local time scale for the centers of the two balls. The limiting process is a pair of two independent Brownian motions. Next we will show that the rescaled stationary distributions for the two balls in a torus of diameter $r$ converge to the product of the stationary (and hence uniform) distributions for the individual balls as $r\to \infty$. 
At the end of Section \ref{main} we will explain why the latter theorem does not immediately follow from the former.

The present paper is a part of a larger project. Our present model is ``almost'' equivalent to the model in which a ball with the center moving as a Brownian motion pushes two point-like particles. The equivalence is not complete because the two balls in our model cannot intersect (by assumption) and hence their centers are always at least two units apart. In the other model, the two point-like particles can come arbitrarily close. Their motion was partly analyzed in \cite{BCP}, where it was proved that the distance between the two particles does not converge to 0 in a three dimensional torus. This is very close to proving recurrence for the two-particle process. The main results of the present article show, more or less, that the particles are independent on the large scale. Only one element of the program initiated in \cite{BCP} is still missing---the positive recurrence (as opposed to the mere recurrence) of the two particle motion. If this gap is filled then this will be, most likely, sufficient to prove Conjectures 1.5 and 1.6 in \cite{BCP}.

The paper is organized as follows. The next section contains the formal description of the model. Section \ref{main} presents the statements of the main results. It is followed by Section \ref{sec:exc} with a review of excursion theory and some results on excursion laws. The motion of a single ball is analyzed in Section \ref{single}. Section \ref{local} is devoted to estimates of local time. Section \ref{hit} gives estimates for hitting distributions. The main theorem on invariance principle is proved in Section \ref{clt}. The theorem on convergence of the stationary distributions is proved in two sections, Sections \ref{sec:irreducibility} and \ref{stationary}, the first of which is devoted to the irreducibility of the process.

\newpage

\section{Preliminaries}\label{prelim}

\subsection{Processes in $\R^d$}\label{prelim1}

First we will consider the case when the Brownian motion and the balls are located in $\R^d$ with $d\geq 2$.
We will consider two moving balls with radii 1 and centers $X_t$ and $Y_t$, resp. Brownian motion will be denoted $B_t$.

Let $\calB(x,r)$ denote the open ball with center $x$ and radius $r$
and let $\calS(x,r) = \prt \calB(x,r)$. The two moving balls will be denoted $\calX_t =\calS(X_t, 1)$ and $\calY_t =\calS(Y_t, 1)$.

For $x\in \calS(y,r)$, let $\n(\calS(y,r), x)$ be the unit outward normal vector to $\calS(y,r)$ at $x$. 

We will now describe the effect of the push of $B_t$ on the trajectory of $X_t$.
 Let us ignore the other ball $\calY_t$ for the moment. We assume that $|B_0 - X_0|\geq 1$, a.s. By the results of \cite{LS}, there exist a continuous process $Z_t$  taking values in $\calB(X_0,1)^c$ and a non-decreasing real valued continuous process (``local time'') $L^X_t$ such that $L^X_0 = 0$, $\int_0^\infty \bone_{\{Z_t \notin \calX_0\}} dL^X_t = 0$, and
\begin{align}\label{m9.2}
Z_t = B_t + \int_0^t \n( \calX_0, Z_s) dL^X_s, \qquad t\geq 0.
\end{align}
At this point, a better name for the process $L^X$ would have been $L^Z$ but $L^X$
was used in anticipation of \eqref{j12.3} below.
The process $Z_t$ is Brownian motion reflected on $\calX_0$. We define $X_t$ by
\begin{align}\label{j12.1}
X_t = X_0 - \int_0^t \n( \calX_0,Z_s) dL^X_s, \qquad t\geq 0.
\end{align}
In this way, the ball $\calX_t$ is pushed by Brownian motion $B_t$. Note that we have $B_t \notin \calB(X_t,1)$ for all $t\geq 0$, $\int_0^\infty \bone_{\{B_t \notin \calX_t\}} dL^X_t = 0$, and
\begin{align}\label{j12.3}
X_t = X_0 - \int_0^t \n( \calX_s,B_s) dL^X_s, \qquad t\geq 0.
\end{align}

Next we will consider the motion of the balls $\calX_t$ and $\calY_t$ only, ignoring $B_t$. Suppose that $X_t$ is a continuous process and $|X_0 - Y_0| \geq 2$. We do not want the balls $\calX_t$ and $\calY_t$ to intersect. We apply the results of \cite{LS} once again. There exist a continuous process $V_t$ taking values in $\calB(Y_0,2)^c$ and a non-decreasing real valued continuous process (``local time'') $L'_t$ such that $L'_0 = 0$, $\int_0^\infty \bone_{\{V_t \notin \calS(Y_0,2)\}} dL'_t = 0$, and
\begin{align*}
V_t = X_t + \int_0^t \n(\calS(Y_0,2), V_s) dL'_s, \qquad t\geq 0.
\end{align*}
We define $Y_t$ by
\begin{align}\label{j12.2}
Y_t = Y_0 - \int_0^t \n( \calS(Y_0,2), V_s) dL'_s, \qquad t\geq 0.
\end{align}
In this way, the ball $\calY_t$ is pushed by the ball $\calX_t$. Note that we have $\calB(X_t,1) \cap \calB(Y_t,1) = \emptyset$ for all $t\geq 0$ and $\int_0^\infty \bone_{\{|X_t - Y_t|>2\}} dL'_t = 0$.

Now we will describe the joint evolution of $B_t, X_t$ and $Y_t$. Suppose that $|B_0 - X_0| \geq 1$, $|B_0 - Y_0| \geq 1$, $|X_0 - Y_0| \geq 2$, and $B_0 \notin  \calX_0 \cap  \calY_0$ (the last condition is satisfied if, for example, $|X_0 - Y_0| > 2$). Assume without loss of generality that $B_t$ hits $\calX_0$ strictly before hitting $\calY_0$. Then  we use \eqref{j12.1} and \eqref{j12.2} to define $X_t$ and $Y_t$ until the first time $T_1 \geq 0$ such that $B_{T_1} \in  \calY_{T_1}$. Suppose that $B_{T_1} \notin  \calX_{T_1}$. At this time we switch the roles of $X_t$ and $Y_t$ in the definitions \eqref{j12.1} and \eqref{j12.2}. In other words, $B_t$ is now pushing the ball $\calY_t$ and the ball $\calY_t$ is pushing the ball $\calX_t$. We define in this way processes $X_t$ and $Y_t$ for $t\geq T_1$ until the first time $T_2\geq T_1$ such that $B_{T_2} \in  \calX_{T_2}$. Suppose that $B_{T_2} \notin  \calY_{T_2}$. We continue in this fashion, i.e., we construct stopping times $T_1, T_2, T_3, \dots$, such that $T_n = \inf\{t\geq T_{n-1} : B_{t} \in  \calY_{t}\}$ for odd $n$ and $T_n = \inf\{t\geq T_{n-1} : B_{t} \in  \calX_{t}\}$ for even $n$. The inductive definition is continued as long as $B_{T_n} \notin  \calX_{T_n}$ for odd $n$ and $B_{T_n} \notin  \calY_{T_n}$ for even $n$. 

\begin{lemma}\label{au13.3}
With probability 1, all stopping times $T_n, n\geq 1$, are well defined and $\lim_{n\to \infty} T_n = \infty$.
\end{lemma}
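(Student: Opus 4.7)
Plan. The lemma has two parts: each $T_n$ is well-defined, meaning the inductive construction never terminates early because $B_{T_n}$ lies on both spheres simultaneously, and $\sup_n T_n = \infty$ almost surely. I would treat the two parts separately.

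For well-definedness I would argue by induction on $n$. Taking $n$ odd for concreteness, $T_n = \inf\{t \geq T_{n-1} : B_t \in \calY_t\}$, and the induction terminates only if in addition $B_{T_n} \in \calX_{T_n}$; combined with $|B_{T_n} - Y_{T_n}| = 1$ and the non-overlap of the balls maintained up to $T_n$, this forces $|X_{T_n} - Y_{T_n}| = 2$ and $B_{T_n}$ to be the unique tangent point of the two spheres. I would rule this out by noting that, conditional on $\calF_{T_{n-1}}$, the distribution of $B_{T_n}$ on $\calS(Y_{T_n},1)$ is absolutely continuous with respect to surface measure. This is standard for a stationary sphere (continuity of Brownian harmonic measure on a smooth boundary), and it extends to the moving sphere $\calS(Y_t,1)$ by first conditioning on the trajectory of $Y$, which during $(T_{n-1}, T_n)$ is a deterministic functional of $B$'s past via the push equations \eqref{j12.1} and \eqref{j12.2}. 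Hence the singleton tangent-point event has probability zero and the induction carries through.

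For the no-accumulation claim, set $\tau := \lim_n T_n$ and suppose $\tau < \infty$ on a set of positive probability. Continuity of $B, X, Y$ together with the alternating hitting structure forces $B_\tau \in \calS(X_\tau, 1) \cap \calS(Y_\tau, 1)$, so the balls are tangent at $\tau$ and $B_\tau$ is the tangent point. I would derive a contradiction by a quantitative analysis of the inter-switch times in the near-tangent regime. The idea is to show that near a tangent configuration, after an appropriate rescaling, the process $(B_t, X_t, Y_t)$ is well approximated by a Brownian motion evolving in the complement of a pair of stationary tangent balls, and for this stationary model the alternating hitting times do not accumulate at a finite point almost surely. The reduction to the stationary model rests on the fact that during any short interval the ball displacements are bounded by the local-time increments $\Delta L^X, \Delta L^Y$, yielding a perturbation estimate.

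The main obstacle is the quantitative treatment of the near-tangent regime: since Brownian motion can in principle bounce between two nearly tangent spheres at arbitrarily high frequency, the individual inter-switch times $T_{n+1} - T_n$ cannot be bounded below by a deterministic constant. I would handle this by combining a scale-invariant excursion estimate for $B$ in the ``gap'' region near the tangent point (to control the joint tail of the $T_{n+1} - T_n$'s) with a comparison argument for the moving balls based on $L^X$ and $L^Y$, and conclude that the total time consumed by infinitely many switches is strictly positive almost surely, contradicting $\tau < \infty$.
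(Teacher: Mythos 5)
There are genuine gaps in both halves of your argument. For well-definedness, the absolute-continuity step does not work as stated. The target you need to avoid is not a fixed point but the tangent point of $\calX_{T_n}$ and $\calY_{T_n}$, which is itself an adapted functional of the same Brownian path; absolute continuity of the law of $B_{T_n}$ (even granting it) does not preclude $B_{T_n}$ coinciding with a random point constructed from $B$ itself. Worse, your proposed route to absolute continuity --- ``first conditioning on the trajectory of $Y$'' --- is circular, since $Y$ is a deterministic functional of $B$ on $[T_{n-1},T_n]$; after such conditioning $B$ is no longer a free Brownian motion and no standard harmonic-measure statement applies. Recall also that during a push $B$ lies on the moving sphere $\calX_t$ on an uncountable set of times, so adapted moving targets certainly can be hit; some genuinely geometric input is required. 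The paper sidesteps all of this by observing that the early-termination event \eqref{au13.5} forces exactly the same tangent configuration as the accumulation case, and then disposes of both cases with a single argument.

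For the no-accumulation claim, you correctly reduce to a tangent configuration at $T_\infty$, but the remaining step --- which you yourself flag as ``the main obstacle'' --- is where the whole proof lives, and your sketch does not supply it. The paper resolves it by a dichotomy on whether the local times $L^X,L^Y$ increase on $[s_1,T_\infty]$. If they do not, the balls are frozen there, and the Brownian path on an interval ending at $T_\infty$ must converge to the tangent point of two fixed tangent spheres while staying outside both balls; near that point the complement is a thin parabolic region, so $T_\infty$ would be a cone point of small opening for the Brownian path, a null event by the theorem of \cite{KBcones}. This cone-point theorem is precisely the ``stationary model'' fact you defer to without proof; it is nontrivial and is the key external input you are missing. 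If the local times do increase, an elementary geometric observation --- when $\dist(B_t,X_t)\le 1.01$ and $\dist(B_t,Y_t)\le 1.01$, any push strictly increases $\dist(X_t,Y_t)$ --- contradicts $\dist(X_t,Y_t)\to 2$. Your plan of rescaling and quantitatively controlling inter-switch times is not carried out and, without the cone-point input, is not obviously salvageable.
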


\begin{proof}
The claim in the lemma may be false for two different reasons.
First, it is possible that for some random time $T_\infty$ and $n<\infty$,
\begin{align}\label{au13.5}
\P(T_\infty = T_n <\infty, B_{T_\infty} \in  \calX_{T_\infty} \cap  \calY_{T_\infty})>0.
\end{align}
Second, it may be that all $T_n$'s are well defined and for some random time $T_\infty$ we have $\P(T_\infty<\infty)>0$ and $T_\infty 
= \lim_{n\to \infty} T_n $, a.s. 

We will analyze the second case first. Consider $\omega$ such that all $T_n$'s are well defined and $T_\infty 
= \lim_{n\to \infty} T_n < \infty$. 
Consider an $\eps\in(0,1)$ and let $t_1 < T_\infty$ be so close to $T_\infty$ that $\sup_{t_1 \leq s,t \leq T_\infty} |B_s - B_t| < \eps$. Suppose that $n_1$ is so large that $T_{2n}> t_1$ for $2n \geq n_1$. Consider any $n$ such that $T_{2n} > t_1$. We have $B_{T_{2n}} \in \calX_{T_{2n}}$, $B_{T_{2n+1}} \in \calY_{T_{2n+1}}$ and $B_{t} \notin \calY_{t}$ for $t\in [T_{2n}, T_{2n+1})$. 
Note that $\calY$ can move during the interval $ [T_{2n}, T_{2n+1})$ only if it is pushed by $\calX$ because it is not pushed by $B$. Simple geometry shows that $\calY$ is being pushed by $\calX$ at a time $t$ only if $B$ is pushing $\calX$ at the time $t$ and $\dist(B_t, Y_t) \geq \sqrt{5}$. 
Suppose that $\dist(B_t, Y_t) \geq \sqrt{5}$ for some $t\in [T_{2n}, T_{2n+1})$ and
let $t_2 = \sup\{t\in  [T_{2n}, T_{2n+1}): \dist(B_t, Y_t) \geq \sqrt{5}\}$.
Then $Y_{t_2} = Y_{T_{2n+1}}$ but this is a contradiction since
\begin{align*}
\dist(Y_{t_2} , Y_{T_{2n+1}})
&\geq \dist(Y_{t_2}, B_{t_2}) - \dist(B_{t_2}, B_{T_{2n+1}})
- \dist(Y_{T_{2n+1}}, B_{T_{2n+1}})\\
&\geq \sqrt{5} - \eps - 1 >0.
\end{align*}
We conclude that $\dist(B_t, Y_t) < \sqrt{5}$ for all $t\in [T_{2n}, T_{2n+1})$ and, consequently, $Y_{t} = Y_{T_{2n+1}}$ for all $t\in [T_{2n}, T_{2n+1})$. This implies that for all $t\in [T_{2n}, T_{2n+1})$,
\begin{align}\label{au13.1}
\dist(Y_t, B_t) \leq \dist(Y_t, Y_{T_{2n+1}}) + \dist ( B_{T_{2n+1}}, Y_{T_{2n+1}})
+ \dist(B_{t}, B_{T_{2n+1}})
\leq  1+ \eps.
\end{align}
Let $t_3(s) = \sup\{t\in [T_{2n}, s]:
B_t \in \calX_t\}$ for $s\in [T_{2n}, T_{2n+1})$. Then, for $s\in [T_{2n}, T_{2n+1})$,
\begin{align}\label{au13.2}
\dist(B_s, X_s) \leq \dist(B_{t_3(s)}, X_{t_3(s)})
+ \dist(B_{t_3(s)}, B_s) \leq 1 + \eps.
\end{align}
By analogy, \eqref{au13.1} and \eqref{au13.2} hold on every interval of the form $ [T_{2n+1}, T_{2n+2})$ for $2n\geq n_1$. Since $\eps$ can be taken arbitrarily small,
\begin{align}\label{au13.4}
\lim_{t\uparrow T_\infty} \dist(B_t, X_t) = \lim_{t\uparrow T_\infty} \dist(B_t, Y_t) = 1.
\end{align}
This implies that $\lim_{t\uparrow T_\infty} \dist(X_t, Y_t) = 2$. 
It is easy to see that this claim and \eqref{au13.4} are also true in the case represented by \eqref{au13.5} because in that case we have $B_{T_\infty} \in  \calX_{T_\infty} \cap  \calY_{T_\infty}$.

Note that the definition \eqref{j12.3} of the local time $L^X_t$ applies in the new context of one Brownian particle and two balls, for $t< T_\infty$.
Let $L^Y_t$ be the ``local time'' of $B_t$ on $ \calY_t$, defined in a way analogous to $L^X_t$.
Suppose that for some $s_1 < T_\infty$, 
\begin{align}\label{j12.4}
L^X_{T_\infty} - L^X_{s_1} = L^Y_{T_\infty} - L^Y_{s_1} =
0.
\end{align}
Then it follows from \eqref{j12.3} and an analogous formula for $Y_t$ that $\calX_t = \calX_{T_\infty}$ and $\calY_t = \calY_{T_\infty}$ for all $t\in[s_1, T_\infty]$. 
Since $\{B_t\} \cap (\calB(X_t,1) \cup \calB(Y_t,1))=\emptyset$ for $t\in[s_1, T_\infty]$ and the circles $ \calX_{T_\infty}$ and $\calY_{T_\infty}$ are tangent at
$B_{T_\infty}$, it follows that for some $s_2 \in(s_1, T_\infty)$, the piece of Brownian path $\{B_t, t\in [s_2, T_\infty]\}$ stays inside a cone with vertex $B_{T_\infty}$ and opening $\pi/8$. This event has probability 0 according to a theorem in
\cite{KBcones}.

Next consider the case when \eqref{j12.4} is false. Let $s_3 < T_\infty$ be so large that $\dist(B_t, X_t) \leq 1.01$ and $\dist(B_t, Y_t) \leq 1.01$
for $t\in[s_3, T_\infty]$. Simple geometry shows that if $\dist(B_t, X_t) \leq 1.01$, $\dist(B_t, Y_t) \leq 1.01$, and $B$ is pushing $\calX$  or $\calY$ at time $t$ then the distance between $X$ and $Y$ is increasing.
We have assumed that  \eqref{j12.4} is false so the amount of push on the interval $[s_3, T_\infty]$ is strictly positive and we conclude that  $\liminf_{t\uparrow T_\infty}\dist(X_t, Y_t) >2$ on the event $\{T_\infty<\infty\}$, contradicting our earlier claim.
\end{proof}

Lemma \ref{au13.3} completes the construction of our model on $\R^d$.

\subsection{Processes on a torus}\label{torus}

We will denote the flat $d$-dimensional torus $\R^d/(r\Z^d)$ with edge length $r$ by $\calT ^d_r $. Technical difficulties with the definition of the joint distribution of $B,X$ and $Y$ on $\R^d$ are local in nature so we will not go into details of the analogous construction on $\calT^d_r$. We limit ourselves to the remark that if $r > 4$ then the evolution of the process $\{(B_t, X_t, Y_t), t\geq 0\}$ on $\calT^d_r$ is the natural analogue of the evolution of this process on $\R^d$.

We will say that $x= y $ (mod $r$) for $x,y \in \R^d$ and $r\in \R$ if $x= y + r z$ for some $z\in \Z^d$. We can identify $\calT_r^d$ with $[0,r)^d $ in the obvious way. If Brownian motion $B_t$ is defined on $\calT^d_r$ then we define its ``unfolded'' versions $\fB_t$ by requiring that it is a  continuous process on $\R^d$, $\fB_0 = B_0$ and $\fB_t = B_t $ (mod $r$) for all $t\geq 0$. Processes $\fX_t$ and $\fY_t$ are defined in an analogous way.

\bigskip

Recall the assumptions that $|B_0 - X_0| \geq 1$, $|B_0 - Y_0| \geq 1$, $|X_0 - Y_0| \geq 2$, and $B_0 \notin  \calX_0 \cap  \calY_0$ from Section \ref{prelim1}. We will always make these assumptions for processes $(B_t, X_t, Y_t)$ on $\R^d$ and $ \calT^d_r$.

\section{Main results}\label{main}

Our first theorem is concerned with the motion of a single ball pushed by Brownian motion in two dimensions. Since the two-dimensional Brownian motion is recurrent, we do not need to place the processes in a torus to obtain a meaningful result. On the technical and conceptual side, the proof of the first theorem is much simpler than those of the other main results so it is natural to state and prove this result first.

Let $L^X$ be defined as in \eqref{j12.3} and let
$\sigma^X_t = \inf\{s \geq 0: L^X_s \geq t\}$. 

\begin{theorem}\label{m22.2}
Suppose that $d=2$, $X_0=0$, $|B_0 - X_0| \geq 1$ and $\{(B_t, X_t), t\geq 0\}$ is defined on $(\R^2)^2$
as in \eqref{m9.2}-\eqref{j12.3}. The processes $\{ n^{-1/2} X_{ \sigma^X_{nt}} , t\geq 0 \}$ converge weakly to Brownian motion as $n\to \infty$.
\end{theorem}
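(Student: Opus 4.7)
The plan is to reduce the invariance principle to a classical central limit theorem via a regeneration argument on the relative process. Set $R_s := B_s - X_s$; from \eqref{m9.2}--\eqref{j12.3} and the identity $\n(\calX_s,B_s)=R_s$ on the support of $dL^X$, the process $R$ is reflected Brownian motion on $\calB(0,1)^c\subset\R^2$ driven by $B$, with reflection local time $L^X$. The push formula \eqref{j12.3} rewrites as
\[
X_{\sigma^X_t}-X_0 \;=\; -\int_0^{\sigma^X_t} R_s\,dL^X_s \;=\; -\int_0^t \xi_u\,du,\qquad \xi_u := R_{\sigma^X_u}\in\calS(0,1),
\]
so the theorem is equivalent to an invariance principle for the additive functional $u\mapsto \int_0^u \xi_v\,dv$ of the boundary-trace process $\xi$ on the unit circle. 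The pathwise bound $|X_{\sigma^X_t}-X_0|\le t$ confirms that $n^{-1/2}$ is the correct diffusive rescaling.

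Next I would introduce regeneration cycles for $R$. Fix $\rho>1$, set $U_0:=\inf\{s:|R_s|=1\}$, and inductively $S_k:=\inf\{s\ge U_{k-1}:|R_s|=\rho\}$, $U_k:=\inf\{s\ge S_k:|R_s|=1\}$. By the strong Markov property of $R$ and rotational invariance of its transition semigroup, the cycle data $\Delta X_k := X_{U_k}-X_{U_{k-1}}$ and $\Delta L_k := L^X_{U_k}-L^X_{U_{k-1}}$ are governed by a Markov chain of starting angles on $\calS(0,1)$ whose transition kernel is rotation-invariant, with uniform invariant distribution; under stationarity the law of $\Delta X_k$ is isotropic. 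Itô's formula applied to $|R|^2$ on $[U_{k-1},S_k]$ (no local time accrues on $[S_k,U_k]$, as $R$ does not touch the unit sphere there) yields
\[
\rho^2 - 1 \;=\; 2\Delta L_k + 2(S_k-U_{k-1}) + N_k,
\]
where $N_k:=2\int_{U_{k-1}}^{S_k} R_s\cdot dB_s$ is a mean-zero martingale term. Taking expectations gives $\mu_L:=\E[\Delta L_k]\in(0,(\rho^2-1)/2]$; combined with the exponential integrability of the exit time from the bounded annulus $\{1\le|r|\le\rho\}$ and the bound $\E[N_k^2]\le 4\rho^2\E[S_k-U_{k-1}]$, squaring the identity yields $\E[\Delta L_k^2]<\infty$. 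The pathwise bound $|\Delta X_k|\le \Delta L_k$ then propagates this to $\E[|\Delta X_k|^2]<\infty$.

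A functional CLT for the partial sums $\sum_{k\le n}\Delta X_k$ now follows from the classical CLT for additive functionals of uniformly ergodic Markov chains: the chain of starting angles is uniformly geometrically ergodic by a Doeblin-type minorization (the landing-angle density after one cycle is bounded away from zero, a routine PDE estimate in the bounded annulus), and the increments have finite second moment. This yields $n^{-1/2}\sum_{k\le n\cdot}\Delta X_k\Rightarrow \sigma W_\cdot$ for a standard planar Brownian motion $W$ and some $\sigma>0$, with isotropic limit covariance by rotational symmetry. Finally, the strong law $L^X_{U_n}/n\to \mu_L$ and the pathwise remainder $|X_{\sigma^X_{nt}}-X_{U_{N(nt)}}|\le \Delta L_{N(nt)+1}=o(\sqrt n)$, where $N(u):=\max\{k:L^X_{U_k}\le u\}$, convert this into the local-time clock and produce $n^{-1/2}X_{\sigma^X_{nt}}\Rightarrow (\sigma/\sqrt{\mu_L})W_t$. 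The main obstacle is the second-moment estimate on a single cycle: direct excursion-theoretic arguments are delicate because the excursion measure of $R$ from $\calS(0,1)$ has only a logarithmic tail in two dimensions, but bundling long excursions into a single cycle terminating at the fixed outer radius $\rho$ yields exponential integrability of the cycle time and hence $L^2$ control on $\Delta L_k$ and $\Delta X_k$.
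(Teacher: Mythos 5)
Your approach is a regeneration argument based on crossings of the annulus $\{1\le|z|\le\rho\}$ by the relative process $R=B-X$, whereas the paper, after making the same reduction to the boundary trace of the process it calls $Z$ (which equals your $R$ since $X_0=0$), proceeds very differently: it conformally maps reflected Brownian motion on a half-plane onto $R$, deduces that $R_{\sigma^X_t}$ is distributed as harmonic measure in $\calB(0,1)$ viewed from the base point $(e^{-t},0)$, reads off the explicit exponentially decaying autocovariance $\tfrac12 e^{-(s-t)}$ of the normal component, and then invokes the functional CLT for $\vphi$-mixing stationary sequences from Billingsley. One thing the paper's route buys, beyond the invariance principle, is the exact value of the diffusion coefficient ($=1$) via the closed-form identity $\E(\bL^{1,Z}_{\sigma^Z_u})^2=u+e^{-u}-1$. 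Your route is more elementary---it avoids the conformal-mapping step entirely and would transplant to $d\ge3$ without modification---but it only produces the limit up to an unidentified positive multiple $\sigma/\sqrt{\mu_L}$.

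There is one genuine gap. You assert $\sigma>0$ without argument. For a CLT for additive functionals of an ergodic Markov chain the asymptotic variance can well be zero (the coboundary degeneracy), so positivity must be proved; in the paper's proof it is immediate from the explicit variance formula above. In the regeneration setting one way to close this is to observe that, conditionally on the endpoints $(R_{U_{k-1}},R_{U_k})$, the cycle increment $\Delta X_k$ is still genuinely random with strictly positive conditional variance (it depends on the interior of the excursion, not merely its endpoints), and this conditional martingale-difference contribution bounds the CLT variance away from zero. As written, this step is absent. Two smaller points worth making explicit: (a) the CLT you cite must be one for Markov/regenerative additive functionals whose marks depend on the \emph{entire} cycle trajectory, not only on the successive chain states $R_{U_k}$, since $\Delta X_k$ is not a function of $(R_{U_{k-1}},R_{U_k})$ alone; and (b) the remainder estimate $|X_{\sigma^X_{nt}}-X_{U_{N(nt)}}|\le\Delta L_{N(nt)+1}$ must be made uniform over $t$ in compacts, which requires $\max_{k\le Cn}\Delta L_k/\sqrt{n}\to 0$ in probability; this follows from the exponential tail of $\Delta L_k$ (equivalently, from \eqref{m9.3}, which shows $\Delta L_k$ is exactly exponential with mean $\log\rho$), but should be stated.
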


Recall the definition of the vector of three process $(B_t, X_t, Y_t)$ on
a torus from Section \ref{torus}. We will sometimes emphasize the dependence on $r$ in the notation by writing $(B^r_t, X^r_t, Y^r_t)$ and similarly $(\fB^r_t, \fX^r_t, \fY^r_t)$.
Let $L^X$ be defined as in \eqref{j12.3}, let $L^Y$ be defined in an analogous way, and let $L_t = L^X_t + L^Y_t$ for $t\geq 0$. Let $\sigma_t$ be the inverse local time, i.e., $\sigma_t = \inf\{s \geq 0: L_s \geq t\}$.
Let $C_d = \sqrt {  (d-1 ) d }$ and note that $C_d >0$ for $d\geq 2$. We have $C_d =\sqrt{2}$ for $d=2$ so the normalization in the following theorem seems to contradict the normalization in Theorem \ref{m22.2}. There is no contradiction because the two theorems use different local time clocks; the local time clock in the next theorem is twice as fast as that in Theorem \ref{m22.2}, on average.

\begin{theorem}\label{j26.2}
Suppose that $d\geq 2$, and for each $r>4$ we have $|B^r_0 - X^r_0| \geq 1$, $|B^r_0 - Y^r_0| \geq 1$, $|X^r_0 - Y^r_0| \geq 2$ and $B^r_0 \notin  \calX^r_0 \cap  \calY^r_0$. Let $\{(B^r_t, X^r_t, Y^r_t), t\geq 0\}$ be defined on $\calT^d_r$. The processes $\{C_d\, n^{-1/2} (\fX^r_{ \sigma_{nt}} -\fX^r_0, \fY^r_{\sigma_{nt}} - \fY^r_0), t\geq 0 \}$ converge weakly to standard $2d$-dimensional Brownian motion when $n\to \infty$ and $r\to\infty$.
\end{theorem}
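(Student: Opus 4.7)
The plan is to prove this functional CLT via a martingale approximation approach. From \eqref{j12.3}, the displacement admits the representation $\fX^r_t - \fX^r_0 = -\int_0^t \n(\calX^r_s, B^r_s)\,dL^X_s$ (and the analog for $\fY^r$), a bounded-variation process driven by normal pushes that largely cancel over time. Under the local-time clock $\sigma_{nt}$ and scaling $C_d n^{-1/2}$, I aim to show convergence to standard Brownian motion in $\R^{2d}$. The two essential ingredients are (a) the per-coordinate asymptotic variance equals $1/C_d^2 = 1/(d(d-1))$ per unit clock time, and (b) the fluctuations of the two balls become asymptotically uncorrelated.

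My first task would be to identify the variance $1/C_d^2$. Using the excursion theory reviewed in Section \ref{sec:exc} applied to excursions of $\fB^r$ away from the moving sphere $\calX^r$, together with the single-ball analysis of Section \ref{single}, I would compute the per-coordinate asymptotic variance of $\fX^r$ per unit of $L^X$, which should come out to $2/(d(d-1))$. The heart of this computation is an average over the uniform invariant distribution of the contact point on the sphere (via rotational symmetry), corrected for the correlation between entry and exit points of each excursion of $\fB^r$ from $\calX^r$. A symmetry argument exploiting that the two balls play exchangeable roles then gives $L^X_{\sigma_{nt}}/(nt) \to 1/2$ and likewise for $L^Y$. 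Together these produce exactly the $C_d^{-2}$ factor in the scaling.

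The main technical obstacle is the asymptotic independence of $\fX^r$ and $\fY^r$. I would decompose time along the alternating stopping times $T_k$ from Lemma \ref{au13.3}: on $[T_{2k}, T_{2k+1}]$ only $L^X$ grows, so only $\fX^r$ is moved by the reflection of $B$; the roles reverse on $[T_{2k+1}, T_{2k+2}]$. Two sources of residual correlation must then be controlled. First, the direct ball-to-ball push recorded by the local time $L'$ of \eqref{j12.2}, which occurs only when the spheres $\calX$ and $\calY$ touch: using the hitting-distribution estimates of Section \ref{hit} I would argue that the set of times with $|X_t - Y_t| \approx 2$ is asymptotically negligible on the $\sigma_{nt}$ scale, so $L'$ is of lower order than $L^X + L^Y$; heuristically, as $r\to\infty$ the Brownian particle arrives at each sphere from an essentially uniform direction, rarely aimed toward the other ball. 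Second, the history-dependence of the contact point at each $T_k$: from the mixing of $\fB^r$ across the torus between consecutive contacts with the balls, using harmonic-measure estimates from Section \ref{hit} and local-time comparisons from Section \ref{local}, the starting contact point is asymptotically uniformly distributed and independent of the past.

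With these ingredients I would conclude by applying a standard martingale functional CLT to the appropriate $2d$-dimensional semimartingale approximation. Tightness in the Skorokhod topology follows from uniform moment bounds on $L^X_{\sigma_{nt}}$ and $L^Y_{\sigma_{nt}}$ on compact time intervals, controlled by the estimates in Section \ref{local}. The Lindeberg-type condition is trivial since $|\n|=1$ and $L^X, L^Y$ are continuous. The joint limit $n\to\infty$ and $r\to\infty$ can be taken because the ergodic-average and mixing estimates hold uniformly in $r$ once $r$ exceeds a scale determined by the typical transit length of $\fB^r$ between contacts with the balls. The hardest single step is the independence argument of the third paragraph: correlations between $\fX^r$ and $\fY^r$ propagate through the shared Brownian driver $B$, and pinning them down quantitatively seems to require the fine control on hitting distributions and local-time ratios that occupies Sections \ref{local} and \ref{hit}.
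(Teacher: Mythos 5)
Your proposal captures the broad architecture of the paper's Step~1: decompose along excursion/return times, exploit near-uniformity of the contact distribution (which the paper makes precise in Lemma~\ref{au5.2} as a mixture $a\,\calU_{x_1} + (1-a)\calD$ with $1-a \leq c\,b^{1-d}$), compute the per-unit-local-time variance via excursion theory (your $2/(d(d-1))$ is consistent with $\lambda_1\lambda_2 \to (d-2)\cdot\frac{2}{(d-2)(d-1)d}$), and argue asymptotic independence from the mixing of $B$ between contacts. The technical route differs, though not fatally: the paper does not invoke a martingale CLT but instead builds an explicit coupling to i.i.d.\ surrogate jump processes $\{\bM^1_n\}$ and applies the Jacod--Shiryaev invariance principle for jump processes (\cite[Thm.~IX~4.21]{JS}), and the ``each ball hit with probability $\tfrac12$'' step comes not from exchangeability alone but from the hitting estimate of Lemma~\ref{au14.1}, since when $B$ is at distance $b$ from $\calX$ it is typically much closer to $\calX$ than to $\calY$, so one must argue that $B$ first forgets its position by wandering the torus.

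There is, however, a genuine gap: you never address what happens when $\fX$ and $\fY$ are close together, either initially (the hypothesis allows $|X_0^r - Y_0^r| = 2$) or at later times. Your third paragraph argues that the ball-to-ball push ``$L'$ is of lower order,'' but that is not how the paper handles it, and I don't see how your heuristic (``rarely aimed toward the other ball'') would be made rigorous: when $|X_t - Y_t|$ is of order $1$, the excursion decomposition of Step~1 breaks down because the excursion from one ball cannot reach a distance $b$ before hitting the other, and the near-uniformity of the contact distribution fails. The paper instead works only on the event $\{k < k_1\}$ (balls stay at distance $\geq 2b$, so no ball-to-ball push at all occurs during the relevant excursions), and devotes all of Step~2 to showing that from any admissible start, the local-time-clock process $A_t = \fX_{\sigma_t} - \fY_{\sigma_t}$ escapes any fixed ball $\calB(0, \gamma\sqrt{n})$ in $O(n)$ local time units with high probability -- via a dyadic-scale renewal argument with stopping times $T^1_k, T^2_k$ and the support theorem to initialize. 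This escape estimate is essential both for handling near-contact starting positions and for extending the convergence to the whole time half-line; without it your conclusion would at best hold for starting configurations with $\dist(X_0, Y_0) \gg \sqrt{n}$, which does not give the theorem as stated.
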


In two dimensions, the last theorem could be stated for 
$\{(B_t, X_t, Y_t), t\geq 0\}$ defined on $(\R^2)^3$. The proof of Theorem \ref{j26.2} would apply almost verbatim in 
that setting so this version of the theorem is omitted.

Theorems \ref{m22.2}-\ref{j26.2} are concerned with processes run with the local time clock as it is more meaningful than the standard clock in this context. The next theorem is stated for processes run with the usual clock but the proof shows that the result is equally true for processes run with the local time clock.

Let $\nu^d_r$ be the uniform probability measure on $\calT_r^d$. 

\begin{theorem}\label{j26.3}
Suppose that $d\geq2$, $r>4$, and $\{(B^r_t, X^r_t, Y^r_t), t\geq 0\}$ is defined on $\calT^d_r$.

(i) The process $(B^r_t, X^r_t, Y^r_t)$ has a unique stationary distribution $\calL^d_r$.

(ii) The distributions of $(X^r_0,Y^r_0)/r$ under $\calL^d_r$ converge to $\nu^d_1 \times \nu^d_1$ as $r\to \infty$.

\end{theorem}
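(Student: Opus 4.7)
The state space $D_r := \{(b,x,y) \in (\calT^d_r)^3 : |b-x|\ge 1,\ |b-y|\ge 1,\ |x-y|\ge 2\}$ is compact, and the construction of Section~\ref{prelim1} together with Lemma~\ref{au13.3} realizes $(B^r,X^r,Y^r)$ as a Feller Markov process on $D_r$. Existence of at least one invariant probability then follows from the Krylov--Bogolyubov theorem. Uniqueness is a direct consequence of the irreducibility result proved in Section~\ref{sec:irreducibility}: any two invariant measures are either equal or have disjoint ergodic decompositions, and irreducibility forbids the latter.

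\textbf{Part (ii).} My plan is first to use translation invariance to reduce the convergence to a statement about the marginal law of the gap $Y^r_0 - X^r_0$, and then to settle this marginal via a self-consistency argument powered by Theorem~\ref{j26.2}. The construction in Section~\ref{prelim1} is equivariant under the diagonal shift $(b,x,y) \mapsto (b+c,x+c,y+c)$ on $\calT^d_r$, so by the uniqueness in (i) the measure $\calL^d_r$ inherits this invariance. Consequently the marginal of $X^r_0$ under $\calL^d_r$ is $\nu^d_r$, it is independent of $Z^r_0 := Y^r_0 - X^r_0$, and the law of $(X^r_0, Y^r_0)/r$ is the pushforward of $\nu^d_1 \otimes \pi^r$ under $(\tilde x, \tilde z) \mapsto (\tilde x, \tilde x + \tilde z)\pmod 1$, where $\pi^r$ denotes the law of $Z^r_0/r$ on $\calT^d_1$. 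It therefore suffices to prove $\pi^r \to \nu^d_1$ weakly on $\calT^d_1$.

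For this, fix $s > 0$, set the local-time parameter $T_r = r^2 C_d^2 s$, and consider $Z^r_{\sigma_{T_r}}/r$. Since $\sigma_{T_r}$ is a stopping time for the Feller process, stationarity forces this random variable to again have law $\pi^r$. On the other hand, applying Theorem~\ref{j26.2} to the process $\fY^r - \fX^r$ shows that $(\fY^r_{\sigma_{T_r}} - \fX^r_{\sigma_{T_r}} - Z^r_0)/r$ converges in distribution to $\sqrt{2s}\,\xi$, where $\xi$ is a standard Gaussian on $\R^d$, and moreover this increment is asymptotically independent of $Z^r_0/r$, which is automatically tight on the compact torus $\calT^d_1$. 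Any subsequential weak limit $\pi$ of the family $(\pi^r)$ therefore satisfies $\pi = \pi \ast \gamma_s$ on $\calT^d_1$, where $\gamma_s$ is the law of $\sqrt{2s}\,\xi \pmod 1$. Since the identity $\pi = \pi \ast \gamma_s$ holds for every $s > 0$, letting $s \to \infty$ and using that $\gamma_s$ converges weakly to the uniform law $\nu^d_1$ gives $\pi = \pi \ast \nu^d_1 = \nu^d_1$ by continuity of convolution. Hence every subsequential limit equals $\nu^d_1$, so $\pi^r \to \nu^d_1$, which proves (ii).

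The principal technical obstacle is justifying the asymptotic independence of $Z^r_0/r$ from the rescaled increment under the stationary initial law, together with the uniformity in the initial configuration needed to replace the fixed-start invariance principle by its stationary-start counterpart. Theorem~\ref{j26.2} is stated for fixed initial data, whereas here the initial condition is itself drawn from the measure $\calL^d_r$ whose weak limit we are trying to identify; one must control the dependence of the invariance principle on the starting configuration well enough to let $r\to\infty$ first and then $s\to\infty$. This is precisely the step that prevents Theorem~\ref{j26.3}(ii) from following mechanically from Theorem~\ref{j26.2}, as warned in the introduction.
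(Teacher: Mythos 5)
Part (i) of your proposal matches the paper's argument in substance (the paper phrases uniqueness via Birkhoff's theorem plus the irreducibility measure of Lemma \ref{jl28.2}, and leaves existence implicit; your Krylov--Bogolyubov remark fills that in harmlessly). Part (ii), however, has a genuine gap at its pivot point. You write that ``since $\sigma_{T_r}$ is a stopping time for the Feller process, stationarity forces $Z^r_{\sigma_{T_r}}/r$ to again have law $\pi^r$.'' This is false: stationarity of a Markov process is preserved under deterministic time shifts, not under stopping times. Indeed $(B_{\sigma_{T_r}},X_{\sigma_{T_r}},Y_{\sigma_{T_r}})$ cannot have law $\calL^d_r$, since $B_{\sigma_{T_r}}$ lies on $\calX\cup\calY$ almost surely while under $\calL^d_r$ it almost surely does not. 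What is true is that the local-time-changed process has its own stationary measure, and the $(X,Y)$-marginals of that measure and of $\calL^d_r$ differ by a reweighting by the expected excursion lifetime $H^x(\zeta)$. Your self-consistency identity $\pi=\pi\ast\gamma_s$ would hold for the time-changed stationary marginal, not for $\pi^r$, and relating the two is exactly the content of the last part of the paper's proof: via the exit system formula \eqref{exitsyst} and Lemma \ref{au4.1}, the reweighting factor is uniformly bounded by $c_1 r^d$ and is within $(1\pm\eps)r^d/s_d$ outside a set of configurations where the two balls are within distance $r_1$ of each other, which is small relative to $\calT^d_r$. This is not a removable formality; it is the step where the ``drift of order $1/r$'' scenario discussed after the statement of Theorem \ref{j26.3} is ruled out.

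A second, smaller issue: the asymptotic independence of the rescaled increment from $Z^r_0/r$, and the validity of the invariance principle when the initial configuration is drawn from the unknown measure $\calL^d_r$ (including configurations with $|X_0-Y_0|$ of order $1$), are asserted rather than proved. These can in fact be extracted from the paper's machinery --- the uniform-in-starting-point Prokhorov estimate at the end of Step 1 of the proof of Theorem \ref{j26.2}, combined with the escape estimate of Step 2 --- but your proposal does not supply this. Setting these gaps aside, your convolution fixed-point argument ($\pi=\pi\ast\gamma_s$ for all $s$, hence $\pi=\nu^d_1$) is an attractive alternative to the paper's route, which instead compares Ces\`aro time averages of $f((X_{\sigma_{r^2 s}},Y_{\sigma_{r^2 s}})/r)$ with those of a torus Brownian motion and invokes the ergodic theorem and Fatou's lemma; but to be complete it must still pass through the local-time-clock versus usual-clock comparison.
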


It follows immediately from translation invariance of the process $(B^r_t, X^r_t, Y^r_t)$ that the distribution of $X^r_0$ under $\calL^d_r$ is $\nu^d_r$, and the same remark applies to $Y^r_0$. Hence, the essence of Theorem \ref{j26.3} (ii) is that the two components of $(X^r_0,Y^r_0)$ are asymptotically independent.

We will explain, in an informal way, why Theorem \ref{j26.3} does not immediately follow from Theorem \ref{j26.2}. Suppose that processes $X$ and $Y$ satisfy the conclusion of Theorem \ref{j26.2}. It is conceivable that the process $|X^r_{ \sigma_t} - Y^r_{\sigma_t}|$ on $\calT^d_r$ has a positive drift of order $1/r$ because this drift would disappear in the limit theorem for $\{n^{-1/2} (X^r_{n \sigma_t} - Y^r_{n\sigma_t}), t\in [0, t_1] \}$ for any fixed $t_1 < \infty$. The process $X^r_{ \sigma_t} - Y^r_{\sigma_t}$ needs about $r^2$ units of time to reach the stationary distribution. On this time scale, the drift of size $1/r$ would move $X_{ \sigma_t} $ and $ Y_{\sigma_t}$ about $r^2/r = r$ units apart, relative to the analogous situation without a drift. Since the effect of the drift on this time scale is comparable with the diameter of the torus, it is conceivable that under the stationary distribution $X^r$ and $Y^r$ would be typically farther apart than two random vectors with the joint distribution  $\nu^d_r \times \nu^d_r$.

\section{Excursion theory}\label{sec:exc}

This section contains a brief review of excursion theory needed
in this paper. See, e.g., \cite{M} for the foundations of the
theory in the abstract setting and \cite{B2} for the special
case of excursions of Brownian motion. Although \cite{B2} does
not discuss reflected Brownian motion, all results we need from
that book readily apply in the present context. 

Let $\P^{x}$ denote the distribution of Brownian motion
starting from $x$ and let $\E^{x}$ be the
corresponding expectation. For a domain (open connected set) $D\subset \R^d$, let $\P^x_D$ denote the distribution
of Brownian motion starting from $x\in D$ and killed upon
exiting $D$.

An ``exit system'' for excursions of reflected Brownian
motion $Z$ from $\prt D$ is a pair $(L^*_t, H^x)$ consisting of
a positive continuous additive functional $L^*_t$ of $Z$
and a family
of ``excursion laws'' $\{H^x\}_{x\in\prt D}$.
 Let $\bDelta$ denote the ``cemetery''
point outside $\ol D$ and let $\calC$ be the space of all
functions $f:[0,\infty) \to \ol D\cup\{\bDelta\}$ which are
continuous and take values in $\ol D$ on some interval
$[0,\zeta)$, and are equal to $\bDelta$ on $[\zeta,\infty)$.
For $x\in \prt D$, the excursion law $H^x$ is a $\sigma$-finite
(positive) measure on $\calC$, such that the canonical process is
strong Markov on $(t_0,\infty)$, for every $t_0>0$, with the
transition probabilities
$\P^\centerdot_D$.
 Moreover, $H^x$ gives zero
mass to paths which do not start from $x$. We will be concerned
only with the ``standard'' excursion laws; see Definition 3.2
of \cite{B2}. For every $x\in \prt D$ there exists a unique
standard excursion law $H^x$ in $D$, up to a multiplicative
constant.

Excursions of $Z$ from $\prt D$ will be denoted $e$ or $e_s$,
i.e., if $s< u$, $Z_s,Z_u\in\prt D$, and $Z_t \notin \prt D$
for $t\in(s,u)$ then $e_s = \{e_s(t) = Z_{t+s} ,\,
t\in[0,u-s)\}$ and $\zeta(e_s) = u -s$. By convention, $e_s(t)
= \bDelta$ for $t\geq \zeta(e_s)$, so $e_t \equiv \bDelta$ if
$\inf\{s> t: Z_s \in \prt D\} = t$.

Let $\sigma_t = \inf\{s\geq 0: L^*_s \geq t\}$ and
$\calE_u = \{e_s: s < \sigma_u\}$.
Let $I$ be
the set of left endpoints of all connected components of $(0,
\infty)\setminus \{t\geq 0: Z_t\in \partial D\}$. The following
is a special case of the exit system formula of \cite{M}. For
every $x\in \ol D$, every bounded predictable process $V_t$ and
every positive universally measurable function $f:\, \calC\to[0,\infty)$
that vanishes on
excursions $e_t$ identically equal to $\bDelta$, we have
\begin{align}\label{exitsyst}
\E^x \left[ \sum_{t\in I} V_t \cdot f ( e_t) \right]
= \E^x \int_0^\infty V_{\sigma_s} H^{Z(\sigma_s)}(f) ds
= \E^x \int_0^\infty V_t H^{Z_t}(f) dL^*_t.
\end{align}
Here and
elsewhere $H^x(f) = \int_\calC f dH^x$. Intuitively speaking,
\eqref{exitsyst} says that the right continuous version
$\calE_{t+}$ of the process of excursions is a Poisson
point process on the local time scale with variable intensity $H^\centerdot$.

The normalization of the exit system is somewhat arbitrary. For
example, if $(L^*_t, H^x)$ is an exit system and
$c\in(0,\infty)$ is a constant then $(cL^*_t, (1/c)H^x)$ is
also an exit system. One can even make $c$ dependent on
$x\in\prt D$. Theorem 7.2 of \cite{B2} shows how to choose a
``canonical'' exit system; that theorem is stated for the usual
planar Brownian motion but it is easy to check that both the
statement and the proof apply to reflected Brownian motion.
According to that result, we can take $L^*_t$ to be the
continuous additive functional whose Revuz measure is a
constant multiple of the surface area measure $dx$
on $\prt D$ and
$H^x$'s to be standard excursion laws normalized so that
\begin{align}\label{eq:M5.2}
H^x (A) = \lim_{\delta\downarrow 0}
\frac 1  \delta \P_D^{x + \delta\n(D,x)} (A),
\end{align}
for any event $A$ in the $\sigma$-field generated by the process
on an interval $[t_0,\infty)$, for any $t_0>0$. 

Recall the local time $L^X$ from the Skorokhod representation of reflected Brownian motion given in \eqref{m9.2}. In the present context $L^X$ will be called $L^Z$.
The Revuz
measure of $L^Z$ is the measure $dx/(2|D|)$ on $\prt D$, i.e.,
if the initial distribution of $Z$ is the uniform probability
measure $\mu$ on $D$,
then 
\begin{align}\label{jl31.1}
\E^\mu \int_0^1 \bone_A (Z_s) dL^Z_s
= \int_A dx/(2|D|)
\end{align}
for any Borel set $A\subset \prt D$. It has
been shown in \cite{BCJ} that $L^*_t=L^Z_t$, i.e., $(L^Z_t, H^x)$ is an exit system if excursion laws $H^x$ are defined as in \eqref{eq:M5.2}.

\subsection{Excursions crossing spherical shells}
We will calculate the ``probability'' under excursion law that an excursion starting at the inner boundary of a spherical shell hits the outer boundary.
Let $D=\calB(0, b) \setminus \ol{\calB(0,1)}$ for some $b>1$.

Let $A$ be the event that the process hits $\calS(0,b)$ before hitting $\calS(0,1)$. The function $x\to \P^x(A)$ is harmonic in $D$ with boundary values 1 on  $\calS(0,b)$ and 0 on $\calS(0,1)$.

In the two-dimensional case we have 
$\P^x(A) = \log|x| /\log b$ for $x\in D$ so for $x\in \calS(0,1)$,
\begin{align}\label{m9.3}
H^x (A) = \lim_{\delta\downarrow 0}
\frac 1  \delta \P_D^{x + \delta\n(D,x)} (A)
= 1/\log b.
\end{align}
For $d\geq 3$, $\P^x(A) = (1-|x|^{2-d} )/(1-|b|^{2-d} )$ for $x\in D$ so for $x\in \calS(0,1)$,
\begin{align}\label{a3.2}
H^x (A) = \lim_{\delta\downarrow 0}
\frac 1  \delta \P_D^{x + \delta\n(D,x)} (A)
= \frac{d-2} {1-|b|^{2-d} }.
\end{align}
The last formula holds with $b=\infty$. We have in that case
\begin{align}\label{m26.4}
H^x (A) = \lim_{\delta\downarrow 0}
\frac 1  \delta \P_D^{x + \delta\n(D,x)} (A)
= d-2.
\end{align}

\subsection{Expected lifetimes of excursions}\label{explif}

We will derive estimates for expected excursion lifetimes in the exterior of two balls in a torus.

Let $s_d = 2 \pi^{d/2} / \Gamma(d/2)$ denote the $(d-1)$-dimensional area of $\calS(0,1) \subset \R^d$.

\begin{lemma}\label{au4.1}
Suppose that $d\geq 2$, $r> 4$, $x_1, x_2 \in \calT^d_r$, and $D = \calT^d_r \setminus (\ol{\calB(x_1,1)} \cup \ol{\calB(x_2, 1)})$. 
Let $(L_t, H^x)$ denote the exit system for reflecting Brownian motion in $D$, normalized as in \eqref{eq:M5.2}.

(i)
There exist $r_1 $ and $c_1$ such that if $|x_1 - x_2| \geq 2$, $r \geq r_1$ and $x\in \calS(x_1,1) \cup \calS(x_2, 1)$, then
\begin{align}\label{au2.3}
 H^x(\zeta) \leq c_1 r^d .
\end{align}

(ii)
For all $\eps >0$ there exist $r_1,r_2 < \infty $ such that  if $|x_1 - x_2| \geq r_1$, $r \geq r_2$ and $x\in \calS(x_1,1) \cup \calS(x_2, 1)$, then
\begin{align*}
(1-\eps) r^d /s_d < H^x(\zeta) < (1+\eps) r^d /s_d.
\end{align*}

\end{lemma}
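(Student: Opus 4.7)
The starting point is the identification \eqref{eq:M5.2} of $H^x(\zeta)$ as
\begin{align*}
H^x(\zeta) = \lim_{\delta \downarrow 0} \frac{1}{\delta}\, \E^{x+\delta\n(D,x)}[\tau_D],
\end{align*}
where $\tau_D$ is the first hitting time of $K := \ol{\calB(x_1,1)} \cup \ol{\calB(x_2,1)}$ by Brownian motion on $\calT^d_r$. The plan is to control this quantity by applying the strong Markov property under the excursion law at the first time $T_b$ the excursion hits an intermediate sphere $\calS(x_1,b)$, which yields
\begin{align*}
H^x(\zeta) = H^x\bigl(\zeta\, \bone_{\{T_b=\infty\}}\bigr) + H^x\bigl(T_b\, \bone_{\{T_b<\infty\}}\bigr) + H^x\bigl(\E^{e(T_b)}[\tau_D]\, \bone_{\{T_b<\infty\}}\bigr).
\end{align*}
The first two terms have size $O(b^2)$, and the escape mass $H^x(T_b<\infty)$ is controlled by \eqref{m9.3} (giving $1/\log b$ when $d=2$) and by \eqref{a3.2} (giving $\leq d-2$ when $d\geq 3$), up to corrections that are negligible provided $b \ll r$ and $b \ll |x_1 - x_2|$.

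For part (i), I would take $b$ to be a large absolute constant when $d \geq 3$ and $b$ of order $r$ when $d=2$. Standard potential-theoretic bounds on the torus yield $\sup_{y\in D} \E^y[\tau_D] \leq C r^d$ in dimensions $d\geq 3$ and $\leq C r^2 \log r$ when $d=2$. Plugging these into the decomposition, the $1/\log b$ escape factor cancels the $\log r$ hitting-time factor in the planar case, leaving $H^x(\zeta) \leq c_1 r^d$ in every dimension $d\geq 2$.

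For part (ii), the key is the integral identity
\begin{align*}
\int_{\partial D} H^x(\zeta)\, dx = 2|D|,
\end{align*}
obtained by applying \eqref{exitsyst} with $V \equiv 1$ and $f = \zeta$ to the reflected Brownian motion started from the uniform law on $D$: the left side of \eqref{exitsyst} equals $T$ up to a negligible boundary error, while the right side equals $T \int_{\partial D} H^x(\zeta)\, dx/(2|D|)$ by the Revuz-measure identity \eqref{jl31.1}. Since $|\partial D| = 2 s_d$ and $|D| \sim r^d$, the boundary-average of $H^x(\zeta)$ is already asymptotic to $r^d/s_d$, which matches the target. It remains to show that $H^x(\zeta)$ is asymptotically constant over $\partial D$: applying the Markov decomposition at $\calS(x_1, b)$ with $1 \ll b \ll |x_1-x_2|\wedge r$, the local contribution is $o(r^d)$; the escape law on $\calS(x_1,b)$ becomes nearly rotationally symmetric (since the other ball, being far away, acts as a nearly spherically symmetric perturbation); and since the Brownian mixing time on $\calT^d_r$ is $O(r^2)$, which is much smaller than the mean hitting time of $K$ (of order $r^d$ for $d\geq 3$ and $r^2\log r$ for $d=2$), the remaining expected lifetime $\E^y[\tau_D]$ is asymptotically the same for all escape points $y$ at distance $\gtrsim b$ from $K$. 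Combined with the matching boundary average, this forces $H^x(\zeta) = (1+o(1))\, r^d/s_d$.

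The main technical obstacle is making the near-constancy argument in part (ii) quantitative: one must control both the rate at which the escape law becomes rotationally symmetric as $b \to \infty$ and the rate at which $\E^y[\tau_D]$ stabilizes to its spatial mean as $y$ moves away from $K$ in the torus. These estimates are most delicate in dimension $d=2$, where the relevant mixing-versus-hitting comparisons pick up logarithmic corrections that must be tracked with precision.
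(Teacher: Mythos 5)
Your overall skeleton matches the paper's: the decomposition of $H^x(\zeta)$ at an intermediate sphere $\calS(x_1,b)$, and the averaging identity $\int_{\prt D}H^x(\zeta)\,dx=2|D|$, which is exactly the paper's ergodic identity \eqref{au2.7} in integrated form and is the correct way to pin down the constant $r^d/s_d$ in part (ii). Your route to part (i) (escape mass $\lesssim 1/\log b$ resp.\ $\leq d-2$, times $\sup_y\E^y[\tau_D]\lesssim r^2\log r$ resp.\ $r^d$, with the logarithms cancelling in the plane) differs from the paper, which instead bounds $H^x(\zeta)$ by the one-hole excursion law $\wt H^x(\zeta)$ and runs the same averaging identity in $\wt D$; your version is viable provided you actually supply the torus hitting-time bounds you call ``standard.''

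The genuine gap is in your mechanism for the near-constancy of $H^x(\zeta)$ over $\prt D$ in part (ii). You reduce it to the claim that $\E^y[\tau_D]$ is asymptotically the same for all escape points $y$ at distance $\gtrsim b$ from $K$, justified by ``mixing time $O(r^2)\ll$ mean hitting time.'' This claim is false in $d=2$: there $\E^y[\tau_D]\asymp r^2\log(\dist(y,K))$, so for $y\in\calS(x_1,b)$ with $b$ bounded (and part (ii) only lets you take $b\lesssim|x_1-x_2|$, which may equal a fixed constant $r_1$ while $r\to\infty$) this is $\asymp r^2\log b$, smaller than the bulk value $\asymp r^2\log r$ by an unbounded factor; equivalently, from distance $b$ the walk returns to $K$ before mixing with probability $1-\log b/\log r\to 1$, so the mixing-vs-hitting comparison gives nothing. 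Your closing remark that the $d=2$ case needs ``logarithmic corrections tracked with precision'' does not repair this: the quantity you want to be constant simply is not. The paper sidesteps the issue entirely: it never compares $\E^y[\tau_D]$ across different $y$. Instead, the Harnack-chain estimate \eqref{au2.11} shows that the escape \emph{distributions} $H^{z_1}(e(S_{2^m})\in dy)$ and $H^{z_2}(e(S_{2^m})\in dy)$ agree up to a factor $1\pm\eps$ for all $z_1,z_2\in\calS(x_1,1)$ (this is \eqref{au2.12}, and is where the hypothesis $|x_1-x_2|\geq r_1(\eps)$ enters); consequently the entire remaining-lifetime functional $\int\E^y T_K\,H^z(e(S_{2^m})\in dy)$ lies within $1\pm\eps$ of a single reference value $\beta$ for every $z$, whatever the (non-constant) profile of $y\mapsto\E^yT_K$ may be, and the averaging identity then solves for $\beta$. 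To fix your proof you would need to replace the lifetime-constancy claim by this distributional merging statement (your ``the escape law becomes nearly rotationally symmetric'' is the right intuition, but making it quantitative and uniform over the starting point is precisely the coupling/Harnack argument that forms the technical core of the paper's Step 2).
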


\begin{proof}
{\it Step 1}.
In this proof, $B$ will denote reflected Brownian motion in $ D$.
Let $\wt D:=\calT^d_r \setminus \ol{\calB(x_1,1)}$ and let $\wt H^x$ denote an excursion law in $\wt D$ normalized as in \eqref{eq:M5.2}. We will consider various ``large'' spheres, balls, etc. We will always tacitly assume that their diameters are smaller than $r$, the edge of the torus.

Every positive harmonic function in $\wt D$ is integrable by the results of \cite{Arm}. Since the density of the expected occupation measure for an excursion law for reflected Brownian motion in $\wt D$ is (a constant multiple of) the Poisson (Martin) kernel, it follows that for some $c_2 = c_2(r)<\infty$, $\wt H^x(\zeta)< c_2$ for every $x\in \calS(x_1,1)$. Since $D\subset \wt D$, $H^x(\zeta) \leq \wt H^x(\zeta)< c_2 < \infty$ for $x\in \calS(x_1,1)$.

For an excursion $e_t$,
let $S_a = \inf\{s\geq 0: e_t(s) \in \calS(x_1, a)\}$. The argument given in the last paragraph implies that for every $a >1$ there exists $\alpha(a) <\infty$ such that
for all $x\in \calS(x_1,1)$,
\begin{align}\label{s27.1}
\wt H^x(\zeta \land S_a) = \alpha(a).
\end{align}

Standard arguments show that \eqref{jl31.1} implies that, a.s.,
\begin{align}\label{au14.2}
\lim_{t\to \infty} L_t/t = 2 s_d/ (2|D|) = s_d/|D|,
\end{align}
and
\begin{align}\label{au2.7}
|D|/s_d = \lim_{t\to \infty} t/L_t
= \lim_{t\to \infty} \sigma_t/t
= \lim_{t\to \infty} \E \sigma_t/t
= \lim_{t\to \infty} \frac 1 t \int_0^t H^{B(\sigma_s)}(\zeta) ds.
\end{align}
The middle expression in \eqref{au14.2} contains 2 in front of $s_d/ (2|D|)$ because the boundary of $D$ consists of two spheres.

\medskip
{\it Step 2}.
Let $D_k = \calB(x_1, 2^k) \setminus \calB(x_1,1) $ and let $f_k(x,\,\cdot\,)$ be the density with respect to the surface area measure $\mu_k$ on $\prt D_k$ of the hitting distribution on $\prt D_k$ of Brownian motion starting from $x \in D_k$. 
We apply the Harnack inequality in the domain \break $\calB(x_1, 2^{k-1} \cdot 3/2)
\setminus \calB(x_1, 2^{k-1}\cdot 3/4)$ to see that there exists $c_3>0$, independent of $k$, such that for all $z_1,z_2\in \calS(x_1, 2^{k-1})$ and $y\in \prt D_k$, 
\begin{align}\label{au2.1}
 f_k(z_1, y)/ f_k(z_2, y) \geq   c_3.
\end{align}
By the strong Markov property applied at the hitting time of
$\calS(x_1, 2^{k-1})$, for $z\in D_k \cap \calB(x_1, 2^{k-1})$ and $y\in \calS(x_1, 2^k) $, 
\begin{align}\label{au2.2}
f_k(z, y) = \int_{\calS(x_1, 2^{k-1})} f_k(x,y) f_{k-1}(z,x) \mu_{k-1}(dx).
\end{align}
We now apply \cite[Lem.~6.1]{BTW} (see \cite[Lem.~1]{BKhosh} for a better presentation of the same estimate) to see that \eqref{au2.1}-\eqref{au2.2} imply that 
there exist constants $C_j$, $1\leq j\leq k-1$, such that for every $1\leq j\leq k-1$ and all 
$z_1,z_2\in \calS(x_1, 2^{k-j})$ and $y\in \calS(x_1, 2^k)$,
\begin{align*}
 f_k(z_1, y)/ f_k(z_2, y) \geq   C_j.
\end{align*}
Moreover, $C_j\in(0,1)$, $C_j$'s depend only on $c_3$, and $1-C_j \le e^{-c_4 j}$ for some $c_4>0$ and all $j$. Hence, for $z_1,z_2\in \calS(x_1, 2)$ and $y\in \calS(x_1, 2^k)$,
\begin{align}\label{au2.11}
 f_k(z_1, y)/ f_k(z_2, y) \geq   C_{k-1} \geq 1 - c_{5} e^{-c_4 k}.
\end{align}

\medskip
{\it Step 3}.
We will first prove part (ii) of the lemma.
Fix an arbitrarily small $\eps >0$. 
Assume that $|x_1 - x_2| > 2^{m+1}$
and choose $m$ so large that, in view of \eqref{au2.11} and the strong Markov property applied at the time $S_2$, 
\begin{align}\label{au2.12}
1-\eps \leq
\frac
{H^{z_1}( e(S_{ 2^m}) \in dy) }
{H^{z_2}( e(S_{ 2^m} )\in dy) }
=
\frac
{\wt H^{z_1}( e(S_{ 2^m}) \in dy) }
{\wt H^{z_2}( e(S_{ 2^m})\in dy) }
\leq 1+\eps,
\end{align}
for all $z_1,z_2\in \calS(x_1,1)$ and $y\in \calS(x_1, 2^m)$.

Let $T_A = \inf\{t\geq 0: B_t \in A\}$ for any set $A$.
We will now treat $m$ as a fixed number (we will consider $r$ a variable and we will let $r\to \infty$) so that \eqref{s27.1} yields 
$H^x(\zeta \land S_{2^m}) = \wt H^x(\zeta \land S_{2^m}) = c_6$ for some $c_6 < \infty$ and all $x\in \calS(x_1,1)$. We obtain for $z_1, z_2 \in \calS(x_1,1)$,
\begin{align*}
H^{z_1}(\zeta) 
& = H^{z_1}(\zeta \land S_{2^m})
+ H^{z_1}(\zeta - S_{2^m}; \zeta \geq S_{2^m})\\
&=  c_{6}
+ \int_{\calS(x_1,2^m)}
\E^y T_{\calS(x_1,1)\cup \calS(x_2,1)} H^{z_1} (e(S_{2^m}) \in dy; \zeta \geq S_{2^m})\\
&\leq c_{6 } + (1+\eps)
\int_{\calS(x_1,2^m)}
\E^y T_{\calS(x_1,1)\cup \calS(x_2,1)} H^{z_2} (e(S_{2^m}) \in dy; \zeta \geq S_{2^m}).
\end{align*}
Let $\beta = \int_{\calS(x_1,2^m)}
\E^y T_{\calS(x_1,1)\cup \calS(x_2,1)} H^{z_0} (e(S_{2^m}) \in dy; \zeta \geq S_{2^m})$ for some arbitrarily chosen $z_0\in \calS(x_1,1)$.
The last estimate may be now written as
\begin{align*}
H^{z}(\zeta) \leq c_{6 } + (1+\eps) \beta,
\end{align*}
for $z\in \calS(x_1,1)$. By symmetry, the estimate holds also for all $z\in\calS(x_2,1)$.
We can use the lower bound in \eqref{au2.12} to derive the analogous lower estimate, so for $z\in \prt D$,
\begin{align}\label{au2.13}
c_{6 } + (1-\eps) \beta \leq H^{z}(\zeta) \leq c_{6 } + (1+\eps) \beta.
\end{align}

Note that $|D|/r^d \to 1$ as $r\to \infty$. 
By \eqref{au2.7} and \eqref{au2.13}, for large $r$,
\begin{align}\label{au2.14}
(1-\eps)r^d/s_d \leq \lim_{t\to \infty} \frac 1 t \int_0^t H^{B(\sigma_s)}(\zeta) ds
\leq \lim_{t\to \infty} \frac 1 t \int_0^t (c_{6 } + (1+\eps) \beta) ds.
\end{align}
For similar reasons,
\begin{align}\label{au2.15}
(1+\eps)r^d/s_d \geq \lim_{t\to \infty} \frac 1 t \int_0^t H^{B(\sigma_s)}(\zeta) ds
\geq \lim_{t\to \infty} \frac 1 t \int_0^t (c_{6 } + (1-\eps) \beta) ds.
\end{align}
The estimates \eqref{au2.14}-\eqref{au2.15} show that, 
\begin{align*}
\frac {1-\eps}{1+\eps} r^d/s_d - \frac{c_{6}}{1+\eps}\leq \beta
\leq \frac {1+\eps}{1-\eps}r^d/s_d - \frac{c_{6}}{1-\eps}.
\end{align*}
Since $\eps>0$ is arbitrarily small and $r$ can be made large, this and \eqref{au2.13} imply part (ii) of the lemma.

Next we prove part (i) of the lemma. Note that the argument given in this step
remains valid if we replace all occurrences of excursion laws $H$ with $\wt H$
and drop the assumption on the distance between $x_1$ and $x_2$.
The only difference is that we would need \eqref{au2.7} with an extra constant 2 on the left hand side because we would use the exit system in $\wt D$ rather than $D$.
Hence, we have $\wt H^x(\zeta) \leq c_7 r^d$ for large $r$, that is, an inequality analogous to the upper estimate in part (ii). Part (i) follows because $H^x(\zeta) \leq \wt H^x(\zeta)$.
\end{proof}

\section{Motion of a single ball}\label{single}

\begin{proof}[Proof of Theorem \ref{m22.2}]

Our starting point is the following equation, analogous to \eqref{m9.2}, where $\calX_0$ is replaced with $\calS(0,1)$. We will consider a continuous process $Z_t$ (reflected Brownian motion) taking values in $\calB(0,1)^c$ and local time $L^Z_t$ such that $L^Z_0 = 0$, $\int_0^\infty \bone_{\{Z_t \notin \calS(0,1)\}} dL^Z_t = 0$, and
\begin{align}\label{m22.1}
Z_t = B_t + \int_0^t \n( \calS(0,1), Z_s) dL^Z_s, \qquad t\geq 0.
\end{align}
Let $\sigma^Z_t$ be the inverse local time, i.e., $\sigma^Z_t = \inf\{s\geq0: L^Z_s \geq t\}$, and
\begin{align}\label{au5.3}
\left(\bL^{1,Z}_t,  \bL^{2,Z}_t\right) =
\bL^Z_t &= \int_0^t \n( \calS(0,1), Z_s) dL^Z_s.
\end{align}
It follows from the recurrence of two-dimensional Brownian motion that $\lim_{t\to \infty} L^Z_t = \infty$. 

Comparing the above setup with the statement of the theorem and 
\eqref{m9.2}-\eqref{j12.3}, we conclude that it will suffice to prove that processes
$\{ n^{-1/2} \bL^Z_{n \sigma^Z_t} , t\geq 0 \}$ converge weakly to Brownian motion as $n\to \infty$.

We will first compute the distribution of $Z_{\sigma^Z_t}$ assuming that $Z_0=(0,1)$. 
Let $K_a=\{(x^1,x^2)\in \R^2: x^1 < a\}$.
Let $W_t= (W^1_t, W^2_t) $ be the reflected Brownian motion in the half-plane $K_0$ starting from $W_0=0$ and let $ L^W_t$ be its local time on $\prt K_0$. Let $\sigma^W_t = \inf\{s\geq0:  L^W_s \ge t\}$. Suppose that $B_0=0$ and let $(a,M_a) $ be the (random) location of $B$ at the hitting time of the line $\prt K_a$. It is well known that the processes $\{W^2_{ \sigma^W_t}, t\geq 0\}$  and $\{M_t, t\geq 0\}$ have the same distribution. Hence, for a fixed $t$, the distribution of $W_{ \sigma^W_t}$ is the same as the harmonic measure on $\prt K_t$ with the base point at $(0,0)$. The complex analytic function $z\to \exp (-z)$ maps $W$ onto a time change of $Z$ and the local time is conformally invariant, so $Z_{\sigma^Z_t}$ is distributed
as the image of the distribution of $W_{ \sigma^W_t}$ under the map $z\to \exp (-z)$. By the earlier remarks and conformal invariance of harmonic measure under the map $z\to \exp (z-t)$, the distribution of $Z_{\sigma^Z_t}$ is the same as the  harmonic measure in $\calB(0,1)$ relative to the base point $\exp(-t)$.
The density of this harmonic measure with respect to the uniform probability measure on $\calS(0,1)$ at a point $z\in\calS(0,1)$ is 
\begin{align}\label{m23.1}
\frac{1- e^{-2t}}{|z-(e^{-t},0)|^2},
\end{align}
 by the Poisson formula (see \cite[Ch.~4, Sect.~6.3]{Ahlf}).

Let $\theta = \arg(z) \in [0, 2\pi)$ for $z\in \calS(0,1)$ and let $\mu_t(dz)$ be the distribution of $Z_{\sigma^Z_t}$.
We will need a formula for $\int_{\calS(0,1)} \cos (\theta) \mu_t(dz)$. Note that for any fixed $\vphi$, the function $z \to \cos(\theta-\vphi)$ is equal to the harmonic function $z=(z^1,z^2) \to z^1\cos(\vphi) + z^2 \sin(\vphi)$ on $\calS(0,1)$. Since $\mu_t(dz)$ is the harmonic measure with the base point $\exp(-t)$, it follows that 
\begin{align}\label{m23.2}
\int_{\calS(0,1)} \cos (\theta-\vphi) \mu_t(dz) = \exp(-t)\cos(\vphi).
\end{align}

Recall the normal vector $\n$ and write
$\n = (\n_1, \n_2)$. 
The strong Markov property applied at $\sigma^Z_t$, invariance of the transition probabilities of $Z$ under rotations about 0, and \eqref{m23.2} imply for $s>t$,
with the convention $\theta = \arg(z)$,
\begin{align}\label{s27.2}
\E\left(\n_1( \calS(0,1), Z_{\sigma^Z_s}) \mid  Z_{\sigma^Z_t} = e^{i\vphi}\right)
= \int_{\calS(0,1)} \cos (\theta-\vphi) \mu_{s-t}(dz) = \exp(t-s)\cos(\vphi).
\end{align}
Let $\calU_x$ denote the uniform probability distribution on $\calS(x,1)$. 
Assume that $B_0$ has the distribution $\calU_0$. Then $Z_{\sigma^Z_t}$ has the same distribution.
Hence, by  \eqref{s27.2},
\begin{align*}
\E&\left(\n_1( \calS(0,1), Z_{\sigma^Z_s})\ \n_1( \calS(0,1), Z_{\sigma^Z_t})\right)
= \int_{\calS(0,1)} \exp(t-s)\cos^2(\theta) \calU_0(dz)\\
& = \int_0^{2 \pi}\frac 1 {2 \pi}\exp(t-s) \cos ^2(\theta) d\theta 
= (1/2)\exp(t-s).
\end{align*}
It follows that
\begin{align}\label{au14.8}
\E\left(\bL^{1,Z}_{\sigma^Z_u}\right)^2 
&= 
\E\left( \int_{\sigma^Z_0}^{\sigma^Z_u} \n_1( \calS(0,1), Z_s) dL^Z_s  \right)^2
= \E\left( \int_0^u \n_1( \calS(0,1), Z_{\sigma^Z_s}) ds  \right)^2\\
&= 2 \int_0^u \int_t^u \E\left(\n_1( \calS(0,1), Z_{\sigma^Z_s})\ \n_1( \calS(0,1), Z_{\sigma^Z_t})\right) ds dt\nonumber\\
&= 2 \int_0^u \int_t^u (1/2) \exp(t-s) \, ds dt = 
 u+ e^{-u} -1 .\nonumber
\end{align}

For $j=1,2$, and $n=1,2, \dots$, let
\begin{align*}
\bL^{j,Z}_{(n)} = \int_{\sigma^Z_n}^{\sigma^Z_{n+1}} \n_j( \calS(0,1), Z_s) dL^Z_s.
\end{align*}
Suppose that $B_0$ has the uniform distribution on $\calS(0,1)$. Then, for every $t$, $Z_{\sigma^Z_t}$ also has the uniform distribution on $\calS(0,1)$. By the strong Markov property, the sequence $\{\bL^{1,Z}_{(n)}, n\geq 0\}$ is strictly stationary. It follows from \eqref{m23.1} and the strong Markov property that  the sequence $\{\bL^{1,Z}_{(n)}, n\geq 0\}$ is $\vphi$-mixing in the sense of \cite[Sect.~20]{Bill} with $\vphi_n \leq c_1 e^{-n}$ for some $c_1<\infty$.
Hence, by \cite[Thm. 20.1]{Bill},  
$\{ n^{-1/2} \bL^{1,Z}_{ \sigma^Z_{\lfloor n t\rfloor}} , t\geq 0 \}$ 
converge in distribution to Brownian motion with some diffusion coefficient, as $n\to \infty$. Note that $|\bL^{1,Z}_{(n)}| \leq 1$, a.s. This implies that $\{ n^{-1/2} \bL^{1,Z}_{  \sigma^Z_{nt}} , t\geq 0 \}$ 
converge to Brownian motion in distribution, as $n\to \infty$. The same applies to 
$\{ n^{-1/2} \bL^{2,Z}_{  \sigma^Z_{nt}} , t\geq 0 \}$ and to every process of the form $\{ n^{-1/2} \bL^{Z}_{  \sigma^Z_{nt}} \cdot v , t\geq 0 \}$, for every vector $v$ of unit length. The last observation can be rephrased by saying that every linear combination of the processes $\bL^{1,Z}_{  \sigma^Z_t}$ and $\bL^{2,Z}_{  \sigma^Z_t}$ satisfies the same type of invariance principle, with possibly different normalizing constant. Applying the strong Markov property at 
$\sigma^Z_{ns}$, we can prove that for any unit vector $v$,
\begin{align*}
\left\{ \{ n^{-1/2}(\bL^{Z}_{  \sigma^Z_{n(t+s)}}- \bL^{Z}_{  \sigma^Z_{ns}}) \cdot v,   t\geq 0 \},
\{ n^{-1/2} \bL^{1,Z}_{  \sigma^Z_{nt}} , t\in[0,s] \},
\{ n^{-1/2} \bL^{2,Z}_{  \sigma^Z_{nt}} , t\in[0,s]\}
\right\}
\end{align*}
converge jointly and the first component of the limit is Brownian motion independent of the other two components. Hence, 
\begin{align*}
\left\{ n^{-1/2} (\bL^{1,Z}_{  \sigma^Z_{nt}},\bL^{2,Z}_{  \sigma^Z_{nt}}) , t\geq 0 \right\}
\end{align*}
converges to a process whose each component is Brownian motion and whose increments are Gaussian and independent. We conclude that the limit process is Gaussian and it is a constant multiple of Brownian motion, by rotation invariance.
It remains to identify the diffusion coefficient. The formula \eqref{au14.8}  implies  that $\lim_{u\to \infty} \Var \bL^{1,Z}_{\sigma^Z_u} /u=1 $.
In general, the existence of a weak limit for a sequence of random variables does not imply that the variance of the limit is the limit of variances but this is the case in the setting of  \cite[Sect.~20]{Bill} so we conclude that the diffusion coefficient is 1. 
\end{proof}

\section{Estimates for the local time}\label{local}

Recall the process $Z$ defined in \eqref{m9.2}. Let $\calX_0 = \calS(0,1)$ in that equation so that the process $Z$ takes values in $\R^d \setminus \calB(0,1)$.
We will identify $L^Z$ with $L^X$ that appeared in \eqref{m9.2}.
Let
\begin{align}\label{au14.7}
\left( \bL^{1,Z}_t, \dots,  \bL^{d,Z}_t\right) &=
 \bL^Z_t = \int_0^{t} \n( \calS(0,1), Z_s) dL^Z_s.
\end{align}
If $d\geq 3$ then \eqref{m26.4} implies that $L^Z_\infty< \infty$, a.s., so $\bL^{Z}_\infty$ is a well defined vector.

\begin{lemma}\label{m22.3}
Suppose that $d\geq 3$ and $\{(B_t, Z_t), t\geq 0\}$ is defined on $(\R^d)^2$. 
Assume that $B_0$ is distributed uniformly on $\calS(0,1)$. Then
\begin{align*}
\E (\bL^{1,Z}_\infty)^2 = \frac{2 }{(d-2 ) (d-1 ) d}.
\end{align*}
\end{lemma}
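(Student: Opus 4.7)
The starting point is the identity $\bL^{1,Z}_\infty=\int_0^\infty Z^1_s\,dL^Z_s$, which holds because $dL^Z$ is supported on $\calS(0,1)$ and there $\n(\calS(0,1),Z_s)=Z_s$. Changing variables to the local-time clock via $s=\sigma^Z_u$ and expanding the square gives
\begin{align*}
\E(\bL^{1,Z}_\infty)^2=2\int_0^\infty\!\!\int_v^\infty \E\!\left[Z^1_{\sigma^Z_u}\,Z^1_{\sigma^Z_v}\,\I_{\{u<L^Z_\infty\}}\right]du\,dv.
\end{align*}
The strong Markov property at $\sigma^Z_v$ together with the rotational symmetry of the dynamics collapses the integrand to $h(u-v)\,\E[(Z^1_{\sigma^Z_v})^2\,\I_{\{v<L^Z_\infty\}}]$, where $h(t):=\E^{e_1}[Z^1_{\sigma^Z_t};\,t<L^Z_\infty]$ is the eigenvalue on the degree-one spherical harmonic $x\mapsto x^1$ of the boundary semigroup $P_tg(x):=\E^x[g(Z_{\sigma^Z_t});\,t<L^Z_\infty]$.

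The marginal factor uses two inputs. Since the uniform law on $\calS(0,1)$ is preserved by the rotation-equivariant dynamics, $Z_{\sigma^Z_v}$ conditional on $\{v<L^Z_\infty\}$ is still uniform on $\calS(0,1)$, so $\E[(Z^1_{\sigma^Z_v})^2\mid v<L^Z_\infty]=1/d$. Formula \eqref{m26.4} gives $H^x(\zeta=\infty)=d-2$ independently of $x\in\calS(0,1)$; by the exit system \eqref{exitsyst}, the escape excursions form a Poisson point process on the local-time axis with constant rate $d-2$, so $L^Z_\infty$ is exponential with rate $d-2$. Consequently
\begin{align*}
\E\!\left[(Z^1_{\sigma^Z_v})^2\,\I_{\{v<L^Z_\infty\}}\right]=\tfrac{1}{d}\,e^{-(d-2)v}.
\end{align*}

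The computation of $h$ is the heart of the argument: the plan is to apply It\^o's formula to the harmonic function $\vphi(z):=z^1/|z|^d$. Since $\vphi$ is harmonic, $dZ_t=dB_t+Z_t\,dL^Z_t$, and a direct computation gives $\nabla\vphi(z)\cdot z=-(d-1)z^1$ on $\calS(0,1)$, one obtains
\begin{align*}
d\vphi(Z_t)=\nabla\vphi(Z_t)\cdot dB_t-(d-1)Z^1_t\,dL^Z_t.
\end{align*}
Integrating on $[0,\sigma^Z_t\wedge n]$, using $\vphi(Z_{\sigma^Z_t})=Z^1_{\sigma^Z_t}$ on the sphere and $\int_0^{\sigma^Z_t}Z^1_s\,dL^Z_s=\int_0^tZ^1_{\sigma^Z_s}\,ds$, then taking $\E^{e_1}$ and letting $n\to\infty$ produces the Volterra equation $h(t)=1-(d-1)\int_0^th(s)\,ds$, whence $h(t)=e^{-(d-1)t}$. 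Combining the three pieces,
\begin{align*}
\E(\bL^{1,Z}_\infty)^2=\frac{2}{d}\int_0^\infty e^{-(d-2)v}\!\int_v^\infty e^{-(d-1)(u-v)}\,du\,dv=\frac{2}{d(d-1)(d-2)}.
\end{align*}

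The main technical obstacle is the optional-stopping step for the martingale $M_t:=\int_0^t\nabla\vphi(Z_s)\cdot dB_s$. The bound $|\nabla\vphi(z)|^2\le(d-1)^2|z|^{-2d}$ together with a routine ODE computation of the radial Green potential for reflected Brownian motion outside $\calB(0,1)$ (which gives $\E^x\int_0^\infty|Z_s|^{-2d}\,ds<\infty$ for $x\in\calS(0,1)$) shows that $M$ is a uniformly integrable $L^2$-martingale. Hence $\E^{e_1}M_{\sigma^Z_t\wedge n}=0$ for every $n$, and bounded convergence legitimizes $n\to\infty$, validating the integral equation for $h$.
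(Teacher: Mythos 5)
Your proof is correct, and the computation lands exactly on the paper's answer, but the key step is carried out by a genuinely different method. The paper also reduces the problem to the double integral of $\E\bigl[\n_1(\calS(0,1),Z_{\sigma^Z_u})\,\n_1(\calS(0,1),Z_{\sigma^Z_v})\bigr]$, uses rotational symmetry and the exponential law of $L^Z_\infty$ (rate $d-2$, from \eqref{m26.4}) in the same way you do, and needs the same eigenvalue $e^{-(d-1)t}$ on the degree-one spherical harmonic. Where you differ is in how that eigenvalue is obtained: the paper invokes Pascu's scaling coupling, representing $Z$ as $B_{\gamma_t}/M_t$ with $L^Z_t=-\log M_t$, and then computes the full defective law of $Z_{\sigma^Z_t}$ explicitly via the Kelvin transformation as $e^{t(2-d)}$ times the harmonic measure from $e^{-t}\be_1$; you instead apply It\^o's formula to the Kelvin-transformed coordinate function $\vphi(z)=z^1/|z|^d$ and extract $h(t)=e^{-(d-1)t}$ from a Volterra equation. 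Your route is self-contained stochastic calculus (no appeal to the coupling of \cite{Pas}), and the supporting estimates check out: $|\nabla\vphi(z)|^2\le (d-1)^2|z|^{-2d}$ is exact, the Green potential of $|z|^{-2d}$ for reflected Brownian motion outside the unit ball is indeed finite in $d\ge 3$, and the passage $n\to\infty$ is justified since $|\vphi|\le 1$, the local-time integral up to $\sigma^Z_t\wedge n$ is bounded by $t$, and $|Z_n|\to\infty$ on $\{t\ge L^Z_\infty\}$ by transience. What the paper's heavier machinery buys is the entire distribution of $Z_{\sigma^Z_t}$ (with an explicit bounded density relative to $\calU_0$), which is reused later, e.g.\ in the proof of Lemma \ref{a3.1}(iii); your argument yields only the action on the first harmonic, which suffices for this lemma but would not by itself supply that density bound.
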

\begin{proof}
Let 
\begin{align*}
M_t &= \inf\{|B_s|: s\in [0,t]\},\\
\wt U_t &= B_t/M_t,\\
C_t &= \int_0^t M_s^{-2} ds,\\
\gamma_t &= \inf\{s \geq 0: C_s \geq t\},\\
U_t &= \wt U_{\gamma_t}.
\end{align*}
Processes $\{U_t, t\geq 0\}$ and $\{Z_t, t\geq 0\}$ have the same distribution. This claim is a slight modification of \cite[Thm.~2.3]{Pas}, where a ``scaling coupling'' was constructed. 
The above construction is related to ``perturbed Bessel processes,'' see, e.g., \cite{DWY}.

Let $L^U_t$ be the local time of $U$ on $\calS(0,1)$. Then, informally speaking, $dL^U = - dM/M$ and, therefore, $L^U_t = - \log M_t$. The last formula can be verified rigorously, for example, by using excursion theory.

Let $\be_k$ be the $k$-th vector in the usual orthonormal basis for $\R^d$ and
\begin{align*}
\sigma^U_t &= \inf\{s\geq 0: L^U_s \geq t\},\\
\sigma^M_t &= \inf\{s\geq 0: M_s = e^{-t}\}.
\end{align*}
For all $t\in (0,\infty)$, the distributions of $\sigma^U_t$ and $\sigma^M_t$ are defective because there may be no $s$ such that $L^U_s \geq t$ or $ M_s = e^{-t}$. 
The distribution of $U_{\sigma^U_t}$
is the same as the distribution of $B_{\sigma^M_t}/ |B_{\sigma^M_t}|$. 
We will give the value 0 to these and similar quantities  in our calculations
when $\sigma^U_t$ or $\sigma^M_t$ are undefined.
The distribution of $B_{\sigma^M_t}$ is the same as the hitting distribution of $\calS(e^{-t}, 0)$. Suppose that $B_0 = \be_1$. Then, by the Kelvin transformation (see \cite[Thm.~3.1, p.~102]{PS}), the distribution  of $B_{\sigma^M_t}$ is the same as the hitting distribution of $\calS(e^{-t}, 0)$ by Brownian motion starting from the point $e^{-2t} \be_1$
times the probability that Brownian motion starting from $\be_1$ will hit $\calS(0,e^{-t})$. The last probability is equal to $e^{-t(d-2)}$. Since $|B_{\sigma^M_t}| = e^{-t}$, we see that the (defective) distribution $\mu_t(dz)$ of $B_{\sigma^M_t}/ |B_{\sigma^M_t}|$ is the same as $e^{t(2-d)}$ times the hitting distribution of $\calS(0,1) $ by Brownian motion starting from the point $e^{-t}\be_1$.

For any fixed unit vector $v\in \calS(0,1)$, the function $z\to v \cdot z$ is harmonic. It follows that 
\begin{align}\label{m26.1}
\int_{\calS(0,1)} v \cdot z \,\mu_t(dz) = e^{-t} e^{t(2-d)} v \cdot \be_1
 =  e^{t(1-d)} v \cdot \be_1.
\end{align}

Let $\calU_x$ denote the uniform probability distribution on $\calS(x,1)$. 
Note that $\calU_0$ is the harmonic measure in $\calB(0,1)$ with the base point at 0.
The function 
\begin{align*}
f(x_1, \dots, x_d) =  ((d-1) x_1^2 - x_2^2 - \dots - x_d^2 +1)/d 
\end{align*}
is harmonic and its values on $\calS(0,1)$ are the same as those of the function
$(x_1, \dots, x_d) \to x_1^2 = (z \cdot \be_1)^2$ so
\begin{align}\label{m26.2}
\int_{\calS(0,1)} (z \cdot \be_1)^2\,  \calU_0(dz) 
= \int_{\calS(0,1)} f(z)\,  \calU_0(dz) 
=f(0)
= 1/d.
\end{align}

Let $\n = (\n_1, \n_2, \dots, \n_d)$ and for $j=1,\dots, d$, and $n\geq 1$,
\begin{align*}
\bL^{j,U}_{(n)} = \int_{\sigma^U_n}^{\sigma^U_{n+1}} \n_j( \calS(0,1), U_s) dL^U_s.
\end{align*}
We have by the strong Markov property applied at $\sigma^U_t$, rotation invariance of reflected Brownian motion and \eqref{m26.1}, for $s>t$ and $v \in \calS(0,1)$,
\begin{align}\label{a3.3}
\E\left(\n_1( \calS(0,1), U_{\sigma^U_s}) \mid  U_{\sigma^U_t} = v\right)
= \int_{\calS(0,1)} v\cdot z \, \mu_{s-t}(dz) = e^{(s-t)(1-d)} v \cdot \be_1.
\end{align}
Now assume that $B_0$ is uniformly distributed over $\calS(0,1)$. Then $U_{\sigma^U_t}$ has the defective distribution $e^{t(2-d)}\calU_0$.
We obtain from \eqref{m26.2} and \eqref{a3.3}, for $s>t$,
\begin{align*}
&\E\left(\n_1( \calS(0,1), U_{\sigma^U_s})\ \n_1( \calS(0,1), U_{\sigma^U_t})\right)
= \int_{\calS(0,1)} e^{(s-t)(1-d)} (z \cdot \be_1)^2 e^{t(2-d)} \calU_0(dz)\\
&\qquad = e^{(s-t)(1-d)} e^{t(2-d)}/d
=  e^{s(1-d)} e^t/d.\nonumber
\end{align*}
It follows that
\begin{align*}
\E\left(\bL^{1,U}_{\infty}\right)^2 
&= 
\E\left( \int_0^\infty \n_1( \calS(0,1), U_s) dL^U_s  \right)^2
= \E\left( \int_0^{\infty} \n_1( \calS(0,1), U_{\sigma^U_s}) ds  \right)^2\\
&= 2 \int_0^{\infty} \int_t^{\infty} \E\left(\n_1( \calS(0,1), U_{\sigma^U_s}) \ \n_1( \calS(0,1), U_{\sigma^U_t})\right) ds dt \nonumber \\
&= 2 \int_0^{\infty} \int_t^{\infty} (1/d) e^{s(1-d)} e^t\, ds dt
\nonumber \\
& = \frac{2 }{(d-2 ) (d-1 ) d}.\nonumber
\end{align*}
\end{proof}

Recall the 
notation from the beginning of this section and \eqref{au14.7}.
Consider any $b>1$ and let
\begin{align}
T_b & = \inf\{ t\geq 0: Z_t \notin  \calB(0, b)\},\nonumber \\
\lambda_1(d,b) &=
\begin{cases}
1/\log b & \text {if  } d=2,\\
\displaystyle
\frac{d-2} {1-|b|^{2-d} } & \text{if  } d\geq 3,
\end{cases} \label{au9.1}\\
\lambda_2(d,b) &=
\E ( \bL^{1,Z}_{T_b})^2 .\label{au9.2}
\end{align}
The expectation on the last line is calculated under the assumption that 
$B_0$ is distributed uniformly on $\calS(0,1)$.

\begin{lemma}\label{a3.1}
Suppose that $d\geq 2$ and $\{(B_t, Z_t), t\geq 0\}$ is defined on $(\R^d)^2$. 
Assume that $B_0$ is distributed uniformly on $\calS(0,1)$. 

(i) If $d=2$ then
\begin{align*}
 \lim_{b\to \infty} \lambda_2(d,b)/\log b = 1.
\end{align*}

(ii) If $d\geq 3$ then
\begin{align*}
\lim_{b\to \infty} \lambda_2(d,b) = \frac{2 }{(d-2 ) (d-1 ) d}.
\end{align*}

(iii) For every $\eps,\beta >0$,
\begin{align*}
\lim_{b\to \infty} b^\beta \lambda_1(d,b)
\E \left(\left( \bL^{1,Z}_{T_b}/ \sqrt{b^\beta }\right) \bone _{\left\{ \bL^{1,Z}_{T_b}/ \sqrt{b^\beta }> \eps\right\}}\right)^2
= 0.
\end{align*}
\end{lemma}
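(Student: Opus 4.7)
For part (i), I will exploit that $H^x(A) = 1/\log b$ for every $x \in \calS(0,1)$, where $A$ denotes the event ``excursion reaches $\calS(0,b)$''; this is precisely \eqref{m9.3}. By Poisson thinning applied to the excursion point process of the exit system \eqref{exitsyst}, the state-independence of $H^\cdot(A)$ turns the Cox process of bad-excursion local times into a genuine homogeneous Poisson process of rate $1/\log b$, and this process is independent of the embedded boundary chain $(Z_{\sigma^Z_s})_{s \geq 0}$. Hence $\tau_b := L^Z_{T_b}$ has the $\mathrm{Exp}(1/\log b)$ distribution and is independent of the chain. Since $\bL^{1,Z}$ is constant on excursion intervals, $\bL^{1,Z}_{T_b} = \bL^{1,Z}_{\sigma^Z_{\tau_b}} = \int_0^{\tau_b} \n_1(\calS(0,1), Z_{\sigma^Z_s})\,ds$, which is measurable with respect to the chain on $[0,\tau_b]$. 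Conditioning on $\tau_b$ therefore gives $\E(\bL^{1,Z}_{T_b})^2 = \E h(\tau_b)$, where $h(u) := \E(\bL^{1,Z}_{\sigma^Z_u})^2 = u + e^{-u} - 1$ from \eqref{au14.8}. A direct calculation with $\tau_b$ exponential yields $\E h(\tau_b) = \log b + (1+\log b)^{-1} - 1$, which divided by $\log b$ tends to $1$.

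For part (ii), my plan is to combine dominated convergence with Lemma \ref{m22.3}. Since $Z$ is transient in $d \geq 3$, $T_b \to \infty$ almost surely and $\bL^{1,Z}_\infty$ is well-defined, so $\bL^{1,Z}_{T_b} \to \bL^{1,Z}_\infty$ almost surely. The square-integrable dominator will be $|\bL^{1,Z}_{T_b}| \leq L^Z_{T_b} \leq L^Z_\infty$, and square-integrability of $L^Z_\infty$ comes from the scaling coupling used in Lemma \ref{m22.3}: since $L^U_t = -\log M_t$ and $L^Z \stackrel{d}{=} L^U$, one has $L^Z_\infty \stackrel{d}{=} -\log M_\infty$ with $M_\infty = \inf_{s\geq 0}|B_s|$, and from the hitting probability $\P(M_\infty < r) = r^{d-2}$ for $r \in (0,1]$ one reads off $L^Z_\infty \sim \mathrm{Exp}(d-2)$.

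For part (iii), I will rewrite the target as $\lambda_1(d,b)\,\E\bigl[(\bL^{1,Z}_{T_b})^2 \bone_{\{\bL^{1,Z}_{T_b} > \eps\sqrt{b^\beta}\}}\bigr]$ and use $|\bL^{1,Z}_{T_b}| \leq L^Z_{T_b}$ to bound it by $\lambda_1(d,b)\,\E\bigl[(L^Z_{T_b})^2 \bone_{\{L^Z_{T_b} > \eps\sqrt{b^\beta}\}}\bigr]$. For $d=2$ the substitution $u = x/\log b$ reduces this to $\int_{\eps\sqrt{b^\beta}/\log b}^\infty u^2 e^{-u}\,du$, which tends to $0$ since $\sqrt{b^\beta}/\log b \to \infty$. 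For $d \geq 3$, I will dominate by $L^Z_\infty \sim \mathrm{Exp}(d-2)$, so the truncated second moment decays as a polynomial in $b^\beta$ times $\exp(-c\sqrt{b^\beta})$, which beats the polynomial prefactor $b^\beta \lambda_1(d,b) \sim (d-2)\,b^\beta$.

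The main obstacle I anticipate is the independence claim underlying part (i). I will need to verify carefully, through the exit system formula \eqref{exitsyst} and Poisson thinning, that the state-independence of $H^\cdot(A)$ promotes from equal marginal intensity to genuine independence between $\tau_b$ and the whole embedded chain, which is a stronger statement than equality of marginal distributions. This is the step where the rotation invariance reflected in \eqref{m9.3} is used in an essential way.
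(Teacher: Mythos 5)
Parts (ii) and (iii) of your proposal are sound. Part (ii) is essentially the paper's own argument: almost sure convergence of $\bL^{1,Z}_{T_b}$ to $\bL^{1,Z}_\infty$ together with the domination $|\bL^{1,Z}_{T_b}|\le L^Z_{T_b}\le L^Z_\infty$ and the exponential law of $L^Z_\infty$ (which follows equally from \eqref{m26.4} or from your scaling-coupling identification $L^Z_\infty\stackrel{d}{=}-\log M_\infty$). Part (iii) is correct and takes a genuinely shorter route than the paper: you get the exponential tail of $\bL^{1,Z}_{T_b}$ for free from $|\bL^{1,Z}_{T_b}|\le L^Z_{T_b}$ and the exact exponential law of $L^Z_{T_b}$, whereas the paper manufactures an exponential tail by iterating the Markov property at the times $\sigma^Z_{2kc_2}$ (a device it needs because it wants the bound for every starting point $x$, which your argument also delivers since the rate $\lambda_1(d,b)$ is state-independent).

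The gap is in part (i), precisely at the independence claim you flagged. The constancy of $H^x(A)=1/\log b$ does give, via the compensator (Watanabe) argument, that the counting process of excursions reaching $\calS(0,b)$ is a homogeneous Poisson process on the local time scale, so $\tau_b=L^Z_{T_b}$ is exponential with mean $\log b$. It does \emph{not} give independence of $\tau_b$ from the embedded chain. Decompose $H^x$ into its restrictions to $A$ and $A^c$. The chain's jumps are the endpoints of the excursions, and on the event $\{\tau_b>u\}$ every excursion occurring before local time $u$ is an $A^c$-excursion; hence, conditionally on $\{\tau_b>u\}$, the chain on $[0,u]$ evolves with the \emph{failure-conditioned} endpoint kernel, not the unconditioned one. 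These kernels differ: an excursion that reaches $\calS(0,b)$ returns to $\calS(0,1)$ with a nearly uniform law, while a typical excursion returns near its starting point. The discrete analogue is a Markov chain in which each step is labelled head/tail with a constant probability $p$ but with different head- and tail-transition kernels: the first-head time is geometric, yet it is not independent of the path. Consequently your identity $\E(\bL^{1,Z}_{T_b})^2=\E h(\tau_b)$ with $h(u)=u+e^{-u}-1$ from \eqref{au14.8} is not exact; under the conditioning the correlation $\E[\n_1(Z_{\sigma^Z_s})\n_1(Z_{\sigma^Z_t})]$ decays like $e^{-\kappa_b(s-t)}$ for some $\kappa_b\ne 1$, and the computation yields $\lambda_2(2,b)\sim\log b/\kappa_b$. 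The conclusion survives because $|1-\kappa_b|=O(1/\log b)$ (the conditioning suppresses excursions arriving at rate $1/\log b$, each perturbing the first Fourier mode by a bounded amount), but that quantitative estimate must be proved; it is exactly the dependence control that the paper supplies instead through its block decomposition at the times $S_k,T_k$, the $\vphi$-mixing bound, and the matching of the limiting diffusion coefficient with that of Theorem \ref{m22.2}.
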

\begin{proof}

(i)
Let $d=2$, $S_1=0$,
\begin{align*}
T_k &= \inf\{t\geq S_k: |Z_t| \geq b\} , \qquad k\geq 1, \\
S_k &= \inf\{t \geq T_{k-1}: |Z_t| = 1\}, \qquad k \geq 2.
\end{align*}
Recall the notation
$\n = (\n_1, \n_2)$ and  for $j=1,2$, and $n=1,2, \dots$, let
\begin{align*}
\wt \bL^{j,Z}_{(n)} = \int_{S_n}^{T_n} \n_j( \calS(0,1), Z_s) dL^Z_s.
\end{align*}

Recall that $B_0$ has the uniform distribution on $\calS(0,1)$. Then, for every $n$, $Z_{S_n}$ also has the uniform distribution on $\calS(0,1)$. By the strong Markov property, the sequence $\{\wt \bL^{1,Z}_{(n)}, n\geq 0\}$ is strictly stationary. 
One can prove that  the sequence $\{\wt \bL^{1,Z}_{(n)}, n\geq 0\}$ is $\vphi$-mixing in the sense of \cite[Sect.~20]{Bill} with $\vphi_n \leq c_1 e^{-n}$ for some $c_1<\infty$
using the formula \eqref{m23.1} and the method employed in the proof of \eqref{au2.11}.
Let $T(t) $ be the largest $T_n \leq t$.
By \cite[Thm. 20.1]{Bill}, for some $c_1$, 
$\{c _1 n^{-1/2} \wt \bL^{1,Z}_{ \sigma^Z_{T(t)}} , t\geq 0 \}$ 
converge to Brownian motion in distribution, as $n\to \infty$. 
The distribution of $L^Z_{T_n} - L^Z_{S_n}$ is exponential with mean $\log b$, by \eqref{m9.3}. Hence
$|\wt \bL^{1,Z}_{(n)}| $ is majorized by an exponential random variable with mean $\log b$. This implies that $\{c _1 n^{-1/2} \bL^{1,Z}_{  \sigma^Z_{nt}} , t\geq 0 \}$ 
converge to Brownian motion in distribution, as $n\to \infty$. 
This we already know from the proof of Theorem \ref{m22.2}. The point of the present argument is that this time we divided the time axis into subintervals of independent lengths with exponential distributions with mean $\log b$. Since the limit process is the same in both cases, the variances must match and, therefore, $\lim_{b\to \infty} \lambda_2(d,b)/\log b = 1$ because the contributions from the cross terms will disappear in the limit, for the same reason why we have $\vphi$-mixing.

(ii) Suppose that $d\geq 3$.
It follows from \eqref{m26.4} that $L^Z_\infty$ has the exponential distribution with mean $1/(d-2)$. This and the formula \eqref{au14.7} show that the family $\{\bL^{1,Z}_{T_b}\}_{b>1}$ is uniformly integrable.
We have $\lim_{b\to \infty}\bL^{1,Z}_{T_b} = \bL^{1,Z}_\infty$, a.s., so
part (ii) of the lemma follows from Lemma \ref{m22.3}.

(iii) 
For any starting point $B_0=x \notin  \calB(0,1)$, the distribution of $Z_{\sigma^Z_1}$ is absolutely continuous with respect to the uniform probability measure on $\calS(0,1)$, according to the proofs of Theorem \ref{m22.2} and Lemma \ref{m22.3}. Let $c_1 = c_1(d)$ be the maximum of the corresponding Radon-Nikodym derivative and note that $c_1(d)<\infty$. 
Let $T'_b = T_b \circ \theta _{\sigma^Z_1}$ where $\theta$ is the usual Markov shift.
By the strong Markov property applied at ${\sigma^Z_1}$ and parts (i) and (ii) of this lemma, for all $x\in \calS(0,1)$,
\begin{align}\label{a8.1}
\E^x ( \bL^{1,Z}_{T_b})^2 
&\leq
\E^x \left( \bL^{1,Z}_{\sigma^Z_1} +(  \bL^{1,Z}_{T'_b} -  \bL^{1,Z}_{\sigma^Z_1})\right)^2\\
&\leq 2\E^x ( \bL^{1,Z}_{\sigma^Z_1})^2 
+ 2 \E^x \left(  \bL^{1,Z}_{T'_b} -  \bL^{1,Z}_{\sigma^Z_1}\right)^2
\nonumber\\
&\leq 2 + 2 c_1 \lambda_2(d,b).\nonumber
\end{align}
 Let $c_2 = \sqrt{2 + 2c_1 \lambda_2(d,b)}$. Then for large $b$, 
\begin{align*}
\P^x ( \bL^{1,Z}_{T_b} \geq 2 c_2) \leq \E^x ( \bL^{1,Z}_{T_b})^2 / (4c_2^2) \leq 1/4.
\end{align*}
An application of the strong Markov property at times $\sigma^Z_{2k  c_2}$ shows that, for $k\geq 1$,
\begin{align*}
\P^x ( \bL^{1,Z}_{T_b} \geq 2 k  c_2) \leq 1/4 ^k,
\end{align*}
and this implies that for some $c_3>0$ and all $a> 0$,
\begin{align*}
\P^x ( \bL^{1,Z}_{T_b} \geq a) \leq \exp(-c_3 a/ c_2 )
=  \exp\left(-\frac{c_3a}{\sqrt{2 + 2c_1 \lambda_2(d,b)}} \right).
\end{align*}
This implies that for every $\eps,\beta >0$,
\begin{align}\label{a5.1}
&\lim_{b\to \infty} b^\beta \lambda_1(d,b)
\E^x \left(\left( \bL^{1,Z}_{T_b}/ \sqrt{b^\beta }\right) \bone _{\left\{ \bL^{1,Z}_{T_b}/ \sqrt{b^\beta }> \eps\right\}}\right)^2\\
& = \lim_{b\to \infty} \lambda_1(d,b)
\E^x \left( \bL^{1,Z}_{T_b} \bone _{\left\{ \bL^{1,Z}_{T_b}
> \eps\sqrt{b^\beta }\right\}}\right)^2 \nonumber \\
&\leq \lim_{b\to \infty} \lambda_1(d,b)
\sum_{k: 2^{k+1} \geq  \eps\sqrt{b^\beta } }
2^{2(k+1)} \exp\left(-\frac{c_3 2^k}{\sqrt{2 + 2c_1 \lambda_2(d,b)}} \right). \nonumber
\end{align}
For a fixed $d\geq 3$, the quantities $\lambda_1(d,b)$ and $\lambda_2(d,b)$ have limits in $(0,\infty)$ as $b\to \infty$. This and the fact that the series 
\begin{align*}
\sum_{k\geq 0 }
2^{2(k+1)} \exp\left(-\frac{c_3 2^k}{\sqrt{2 + 2c_1 c_4}} \right)
\end{align*}
is summable for any $c_4\in(0,\infty)$ imply that 
the limit in \eqref{a5.1} is equal to 0. This proves part (iii) for $d\geq 3$.

\medskip

For $d=2$, $\lim_{b\to \infty} \lambda_2(d,b)/ \log b = 1$. If $2^{k+1} \geq  \eps\sqrt{b^\beta }$ and $n\geq k+1$, then the ratio of the two consecutive terms in the series on the right hand side of \eqref{a5.1}, corresponding to indices $k=n$ and $k=n+1$, is equal to
\begin{align*}
&(1/4) \exp\left(-\frac{c_3 2^n}{\sqrt{2 +  2c_1  \lambda_2(d,b)}} 
+\frac{c_3 2^{n+1}}{\sqrt{2 +  2c_1  \lambda_2(d,b)}} \right)\\
&= (1/4) \exp\left(\frac{c_3 2^n}{\sqrt{2 +  2c_1  \lambda_2(d,b)}}  \right)
\geq (1/4) \exp\left(\frac{c_3 \eps\sqrt{b^\beta }}{\sqrt{2 +  2c_1  \lambda_2(d,b)}}  \right).
\end{align*}
The last expression is greater than $1/2$ for sufficiently large $b$, so for large $b$, the series on the right hand side of \eqref{a5.1} is bounded by twice its first term, and this implies that
\begin{align*}
&\lim_{b\to \infty} b^\beta \lambda_1(d,b)
\E^x \left(\left( \bL^{1,Z}_{T_b}/ \sqrt{b^\beta }\right) \bone _{\left\{ \bL^{1,Z}_{T_b}/ \sqrt{b^\beta }> \eps\right\}}\right)^2\\
&\leq \lim_{b\to \infty} 2 \lambda_1(d,b)
 \eps^2 b^\beta  \exp\left(-\frac{c_3 \eps\sqrt{b^\beta }/2}{\sqrt{2 +  2c_1  \lambda_2(d,b)}} \right)=0. \qedhere
\end{align*}
\end{proof}

\section{Hitting distribution estimates}\label{hit}

For an open set $D$ and a point $x\in D$, let $\mu^D_x(dy)$ be the harmonic measure on $\prt D$ with the base point $x$.

\begin{lemma}\label{au14.1}
Consider $b>2$, $r> 8b$, and points $x_1, y_1 \in \calT^d_r$ with
$\dist(x_1, y_1) > 2b$.
Suppose that $z_1  \in \calS(x_1, b)\cup \calS(y_1,b)$ and let $D = \calT^d_r \setminus (\ol{\calB(x_1,1)} \cup \ol{\calB(y_1,1)})$.
There exists $c_1$ such that
\begin{align}\label{m9.1}
1- c_1 /\log b \leq 
\mu^{D}_{z_1}(\calS(x_1,1)) / \mu^{D}_{z_1}(\calS(y_1,1)) \leq 1 + c_1/\log b,
\qquad \hbox{  if  } d=2, \\
1- c_1 b^{2-d} \leq 
\mu^{D}_{z_1}(\calS(x_1,1)) / \mu^{D}_{z_1}(\calS(y_1,1)) \leq 1 + c_1 b^{2-d},
\qquad \hbox{  if  } d\geq 3. \label{au15.1}
\end{align}

\end{lemma}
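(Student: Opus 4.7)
Without loss of generality I take $z_1 \in \calS(x_1, b)$; the case $z_1 \in \calS(y_1, b)$ is symmetric. Write $h(z) := \mu^D_z(\calS(x_1, 1))$, so that $\mu^D_z(\calS(y_1, 1)) = 1 - h(z)$. Setting $\lambda_b := 1/\log b$ for $d = 2$ and $\lambda_b := b^{2-d}$ for $d \ge 3$, the stated ratio bounds are equivalent to $|h(z_1) - 1/2| \le c\,\lambda_b$ for some constant $c$. The hypothesis $|x_1 - y_1| > 2b$ ensures that the annulus $A := \calB(x_1, 2b-1) \setminus \ol{\calB(x_1, 1)}$ is contained in $D$, so $h$ is harmonic in $A$.

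The first step is to reduce to a spherical average. Iterating the Harnack inequality on the dyadic shells $\calS(x_1, 2^j) \subset A$, $1 \le j \le \lfloor \log_2(2b-1) \rfloor$, exactly as in Step~2 of the proof of Lemma~\ref{au4.1}, yields $\sup_{z, z' \in \calS(x_1, b)} |h(z) - h(z')| \le c_2\, b^{-\gamma}$ for some $\gamma > 0$, so it suffices to estimate the uniform average $\overline{h} := \int h\, d\sigma_b$ with $\sigma_b$ the uniform probability on $\calS(x_1, b)$. Next I would set $\rho := 2b - 1$ and use the explicit annulus hitting probability, which equals $\log(\rho/b)/\log \rho$ in $d = 2$ and $(b^{2-d} - \rho^{2-d})/(1 - \rho^{2-d})$ in $d \ge 3$; both are $O(\lambda_b)$. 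Because $\sigma_b$ is rotationally invariant about $x_1$ and $A$ is a ball difference about $x_1$, the exit distribution from $A$ conditioned on exit through $\calS(x_1, \rho)$ is itself the uniform $\sigma_\rho$. The strong Markov property at this exit time then yields
\[
\overline{h} \;=\; p \,+\, (1 - p)\, \overline{h}_\rho, \qquad \overline{h}_\rho := \int h \, d\sigma_\rho,
\]
and reduces the problem to showing the macroscopic balance $\overline{h}_\rho = 1/2 + O(\lambda_b)$.

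This macroscopic balance is the main obstacle. The torus has no isometry swapping the two balls, so the balance cannot come from elementary symmetry; it must come from the global recurrent mixing of Brownian motion on the torus, quantified by Lemma~\ref{au4.1}(ii), which says that the excursion-law masses $H^x(\zeta)$ on the two boundary spheres agree up to factor $1 + o(1)$. Combining this via the exit system formula \eqref{exitsyst} and a time-reversal argument with the Harnack near-constancy of $h$ on the analogous spheres of radius $\rho$ about $y_1$ (obtained by repeating the first step centered at $y_1$, whose dyadic shells also lie in $D$), one should identify $\overline{h}_\rho$ and the analogous average of $1-h$ about $y_1$ as two matching fractions of the same total boundary flux, forcing $\overline{h}_\rho = 1/2 + O(\lambda_b)$. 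Granting this, the three steps combine to give $h(z_1) = 1/2 + O(\lambda_b)$, hence the stated ratio bounds for both regimes of $d$.
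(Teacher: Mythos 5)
There is a genuine gap, and it sits exactly where you flag ``the main obstacle'': the macroscopic balance $\overline{h}_\rho = 1/2 + O(\lambda_b)$ is the entire content of the lemma, and your proposal does not prove it --- the final step is a wish (``one should identify \dots forcing \dots''), not an argument. Worse, the premise you use to motivate that step is false: the flat torus \emph{does} have an isometry swapping the two balls, namely the point reflection $z \mapsto x_1 + y_1 - z \pmod r$, which exchanges $\calB(x_1,1)$ and $\calB(y_1,1)$ and hence satisfies $h(z) + h(x_1+y_1-z) = 1$. The paper's proof is built precisely on this elementary symmetry: it shows that the closed sets $A(x_1)=\{h\ge 1/2\}$ and $A(y_1)=\{h\le 1/2\}$ have equal volume (hence each has volume $\ge r^d/2$), so Brownian motion launched from $z_1$, unfolded to $\R^d$ and watched between successive crossings of concentric cubes of side $3r$, hits the interface $\prt A(x_1)\subset\{h=1/2\}$ with probability bounded below on each crossing while hitting either unit sphere with probability only $O(\alpha(d,b))$ per crossing; the strong Markov property at the first visit to $\{h=1/2\}$ then gives $|h(z_1)-1/2|\le c\,\alpha(d,b)$ directly, with no averaging needed.

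Your proposed substitute route would not close the gap even if fleshed out. Lemma \ref{au4.1}(ii) is an asymptotic statement requiring $|x_1-x_2|\ge r_1(\eps)$ and $r\ge r_2(\eps)$, whereas the present lemma needs a quantitative bound $O(1/\log b)$ resp.\ $O(b^{2-d})$ holding uniformly for all $r>8b$; and stationary-measure/time-reversal considerations (the Revuz measure being uniform surface measure on both spheres) only control long-run hitting \emph{frequencies}, not the harmonic measure seen from the specific point $z_1$ --- converting one into the other requires exactly the kind of ``forget the starting point before hitting either ball'' estimate that constitutes the paper's proof. A secondary issue: your Harnack step claims $\Osc(h)\le c_2 b^{-\gamma}$ on $\calS(x_1,b)$, but that sphere is only one dyadic scale inside the hole-free annulus $\calB(x_1,2b-1)\setminus\ol{\calB(x_1,1)}$, so a Harnack chain there yields only a multiplicative constant, not $1+O(b^{-\gamma})$; the additive oscillation bound you invoke would need either a much longer chain or a different argument. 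The paper sidesteps this entirely by never averaging over $\calS(x_1,b)$.
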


\begin{proof}

Let 
\begin{align*}
A(x_1) &= \{z\in \calT^d_r: \mu^{D}_{z}(\calS(x_1,1)) \geq \mu^{D}_{z}(\calS(y_1,1))\},\\
A(y_1) &= \{z\in \calT^d_r: \mu^{D}_{z}(\calS(x_1,1)) \leq \mu^{D}_{z}(\calS(y_1,1))\}.
\end{align*}
We have $A(x_1) \cup A(y_1) = \calT^d_r$ so the volume of each of these sets is equal to or greater than $r^d/2$ (the volumes are equal by symmetry). 
We define some subsets of $\R^d$ as follows,
\begin{align*}
\bA(x) &= A(x) + r \Z^d \ 
\text{  for  } x=x_1,y_1; \qquad
\bS(x,1) =  \calS(x,1)+ r \Z^d \ 
 \text{  for  } x\in \calT^d_r.
\end{align*}

The problem is invariant under translations so we may and will assume that $x_1$ and $y_1$ are positioned in such a way in $\calT^d_r = [0,r)^d \subset \R^d$  that $\calS(x_1,b) \cup \calS(y_1,b)$ does not intersect the boundary of $[0,r)^d$.

Let $B$ denote Brownian motion in $\R^d$ starting from $z_1 \in \calS(x_1,b) \cup \calS(y_1,b)$ and for $K\subset \R^d$ let $T^B(K)$ be the first hitting time of $K$.
Let $U_1 = 0$ and $\bT'_1 = \calT_r^d$, and note that $B_{U_1} \in \bT'_1$. Let $\bT_1$ be a $d$-dimensional cube in $\R^d$ with the same center as $\bT'_1$ but with edge length equal to $3r$.
For $k\geq 2$, let $U_k = \inf\{t\geq U_{k-1}: B_t \in \prt \bT_{k-1}\}$
and let $\bT'_k = \calT_r^d + r x$, where $x\in \Z^d$ is chosen so that $B_{U_k} \in \bT'_k$ (if such an $x$ is not unique then we choose one of the $x$'s in an arbitrary way).
Let $\bT_k$ be the $d$-dimensional cube in $\R^d$ with the same center as $\bT'_k$ but with the edge length equal to $3r$.
By our assumption on the positions of $x_1$ and $y_1$, the distance from $B_{U_k}$ to $\bS(x_1,1) \cup \bS(y_1,1)$ is greater than or equal to $b-1$ for all $k$, a.s.
Suppose that $z  \notin \ol{\calB(x_1, b)}\cup \ol{\calB(y_1,b)}$. 
Let $\alpha(2,b) = 1/\log b$ and $\alpha(d,b) = b^{2-d}$ for $d\geq 3$.
The following estimate is standard, 
\begin{align*}
\P^{z}(T^B(\calS(x_1,1)) < U_2 ) 
\leq c_2 \alpha(d,b).
\end{align*}
The set $\bS(x_1,1) \cap \bT_{k-1}$ consists of $3^d$ copies of $\calS(x_1,1)$, each one of them at a distance greater than or equal to $b-1$ from $B_{U_{k-1}}$.
This, the strong Markov property and the last estimate imply that
\begin{align*}
\P^{z_1}(T^B(\bS(x_1,1) \cap \bT_{k-1}) < U_k \mid \calF_{U_{k-1}}) 
\leq c_3 \alpha(d,b).
\end{align*}
Since the $d$-dimensional Lebesgue measure of $\bA(x_1) \cap \bT_{k-1}$ is greater than $(3r)^d/2$, we have for some $c_4$,
\begin{align*}
\P^{z_1}(T^B(\bA(x_1) \cap \bT_{k-1}) < U_k \mid \calF_{U_{k-1}}) 
\geq c_4.
\end{align*}
For the same reason 
\begin{align*}
\P^{z_1}(T^B(\bA(y_1) \cap \bT_{k-1}) < U_k \mid \calF_{U_{k-1}}) 
\geq c_4,
\end{align*}
so
\begin{align*}
\P^{z_1}(T^B(\prt \bA(x_1) \cap \bT_{k-1}) < U_k \mid \calF_{U_{k-1}}) 
\geq c_4.
\end{align*}
The above estimates imply that 
\begin{align*}
\P^{z_1}(T^B(\bS(x_1,1) ) \leq T^B(\prt  \bA(x_1)) ) 
\leq c_5 \alpha(d,b).
\end{align*}
For any $x\in \prt  \bA(x_1)$, we have
$\P^x(T^B(\bS(x_1,1) )  \leq T^B(\bS(y_1,1) ) ) =1/2$, so
\begin{align*}
\P^{z_1}(T^B(\bS(x_1,1) )  \leq T^B(\bS(y_1,1) ) ) \leq 1/2 + c_5 \alpha(d,b).
\end{align*}
By analogy,
\begin{align*}
\P^{z_1}(T^B(\bS(y_1,1) )  \leq T^B(\bS(x_1,1) ) ) \leq 1/2 + c_5 \alpha(d,b).
\end{align*}
The last two estimates imply \eqref{m9.1}-\eqref{au15.1}.
\end{proof}

Recall that for an open set $D$ and a point $x\in D$, $\mu^D_x(dy)$ is the harmonic measure on $\prt D$ with the base point $x$ and
$\calU_x$ denotes the uniform probability distribution on $\calS(x,1)$.

\begin{lemma}\label{au5.2}
Suppose that $x_1, y_1 \in \calT^d_r$ and
$\dist(x_1, y_1) > 2b$.
Let $D = \calT^d_r \setminus  (\ol{\calB(x_1,1) }\cup \ol{ \calB(y_1,1)})$.
There exists $c_1$ such that for $b>4$ and $r> 8b$ there exists  $a\in(0,1]$ such that for $z  \in \calS(x_1, b)\cup \calS(y_1,b)$
there exists  a probability distribution $\calD$ on $\calS(x_1, 1)$
satisfying
\begin{align}\label{au4.2}
\mu^{D}_{z}(\,\cdot\,) / \mu^{D}_{z}(\calS(x_1,1)) = a \calU_{x_1} + (1-a) \calD,
\qquad \hbox{  on  } \calS(x_1,1),
\end{align}
and
\begin{align}\label{au4.3}
1-a < c_1 b^{1-d}.
\end{align}
\end{lemma}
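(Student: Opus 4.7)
I plan to reduce the lemma to the pointwise lower bound $\rho(y) \geq 1 - c_1 b^{1-d}$ for every $y \in \calS(x_1,1)$, where $\rho(y) := (d\mu^D_z/d\calU_{x_1})(y)/\mu^D_z(\calS(x_1,1))$ is the Radon--Nikodym density of the normalized hitting measure with respect to $\calU_{x_1}$. Given such a bound, the mixture decomposition is obtained by setting $a = \inf_y \rho(y)$ and $\calD = ((\rho - a)/(1-a))\,\calU_{x_1}$, both of which satisfy the required properties. The case $z \in \calS(y_1,b)$ is symmetric to $z \in \calS(x_1,b)$ by interchanging the roles of the two balls, so I would focus on the latter.

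The key step is a decomposition of $\mu^D_z|_{\calS(x_1,1)}$ into a ``direct'' and an ``indirect'' part via the strong Markov property at $\tau := \inf\{t\geq 0 : B_t \in \calS(x_1,2b)\}$:
\begin{align*}
\mu^D_z(\cdot)|_{\calS(x_1,1)} = \mu^{\mathrm{dir}}(\cdot) + \E^z\!\bigl[\mu^D_{B_\tau}(\cdot)|_{\calS(x_1,1)};\, \tau < \tau_{\partial D}\bigr],
\end{align*}
where $\mu^{\mathrm{dir}}$ collects paths reaching $\calS(x_1,1)$ before $\calS(x_1,2b)$. By a standard one-dimensional harmonic-function calculation, the total mass $m_{\mathrm{dir}}$ of $\mu^{\mathrm{dir}}$ is of order $b^{2-d}$ for $d\geq 3$ (and $1/\log b$ for $d=2$). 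A spherical-harmonic expansion of the Poisson kernel of the annulus $\calB(x_1,2b)\setminus\ol{\calB(x_1,1)}$ shows that the $n$-th angular mode at radius $b$ decays relative to the zeroth mode like $b^{-n}$; in particular, the normalized direct hitting density on $\calS(x_1,1)$ is within $O(1/b)$ of $\calU_{x_1}$, dominated by the dipole mode. Consequently, using Lemma~\ref{au14.1} to conclude $\mu^D_z(\calS(x_1,1))$ is of order $1$, the direct part contributes a deviation of $\rho$ from the constant $1$ of at most $m_{\mathrm{dir}}/\mu^D_z(\calS(x_1,1)) \cdot O(1/b) = O(b^{1-d})$, which is the desired order.

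For the indirect expectation term, starting from $B_\tau \in \calS(x_1,2b)$ the expected time to hit $\calS(x_1,1) \cup \calS(y_1,1)$ is of order $r^d$ ($d\geq 3$) or $r^2 \log r$ ($d=2$), both far exceeding the torus mixing time $\sim r^2$. The process therefore equilibrates to nearly uniform on $\calT_r^d$ before absorption. Rotational invariance of $\calS(x_1,1)$ about $x_1$ then forces the hitting distribution on $\calS(x_1,1)$ from the uniform starting distribution to be exactly $\calU_{x_1}$, up to a correction from $\calS(y_1,1)$ breaking the symmetry that is controlled at the $O(b^{2-d})$ level by Lemma~\ref{au14.1}. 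To quantify the rate of approach to uniformity, one may iterate the boundary Harnack principle across the dyadic sequence of spheres $\calS(x_1,2^k)$ for $k$ ranging between $\lceil\log_2(2b)\rceil$ and $\lfloor\log_2 r\rfloor$, in the style of the proof of Lemma~\ref{au14.1}: at each shell crossing the angular variation of the normalized hitting density on $\calS(x_1,1)$ contracts by a uniform geometric factor, producing a cumulative $O(b^{1-d})$ bound on the deviation of the indirect density from $\calU_{x_1}$.

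\textbf{Main obstacle.} The technical core is the Harnack iteration for the indirect part: obtaining the deviation bound $O(b^{1-d})$, which is a factor of $b$ tighter than the single-shell Poisson-kernel estimate, requires exploiting the cumulative mixing of the process across all dyadic shells between $\calS(x_1,2b)$ and a sphere of radius comparable to $r$, rather than any single scale. One must also handle the asymmetry introduced by $\calS(y_1,1)$; since $\dist(x_1,y_1) > 2b$, its effect on the hitting density at scale $1$ is of order $b^{2-d}$ by Lemma~\ref{au14.1}, comfortably subdominant to the target accuracy.
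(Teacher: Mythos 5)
Your reduction to a pointwise density bound and your treatment of the direct part are sound and run parallel to the paper's Step 1 (the paper obtains the single-descent oscillation bound $\le c\,b^{1-d}$ by a reflection/monotonicity argument plus the boundary Harnack principle rather than by spherical harmonics, but both routes amount to mass $O(b^{2-d})$ times relative deviation $O(1/b)$). The genuine gap is in the indirect part. First, iterating the boundary Harnack principle across dyadic shells contracts the angular variation by an \emph{unspecified} geometric factor per shell, so it can only yield a bound of the form $b^{-c}$ (or $(r/b)^{-c}$) with $c>0$ determined by the Harnack constant; there is no reason for $c$ to equal $d-1$, and the exponent $d-1$ cannot be produced by such an iteration --- it must come from an explicit single-scale computation, exactly as in your direct part. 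Second, and more seriously, your decomposition at the single time $\tau$ is circular: the term $\E^z[\mu^D_{B_\tau}(\cdot)]$ is the original problem from a new starting point, and before absorption the path returns to a sphere of radius comparable to $b$ about $x_1$ of order $b^{d-2}$ times (of order $\log b$ times when $d=2$). Each return contributes a single-descent error of order $b^{1-d}$, so the naive sum is $O(1/b)$, too large by the factor $b^{d-2}$. Your proposal has no mechanism for beating this sum. The paper's does: its Step 2 sets up a renewal structure (the times $V_k,R_k$ and the count $K$) and proves in \eqref{s28.3} that the conditional law of the $k$-th return point, given no earlier absorption, equalizes at a geometric rate $e^{-ck}$ \emph{in the return index} $k$; since a return distribution that is rotationally symmetric about $x_1$ contributes zero oscillation on $\calS(x_1,1)$, only the first $O(1)$ returns contribute at order $b^{1-d}$ and the sum over $k$ stays at $O(b^{1-d})$. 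This symmetrization-in-$k$ step is the heart of the lemma and is absent from your argument.

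A smaller but real error: you assert that the asymmetry induced by $\calS(y_1,1)$ is of order $b^{2-d}$ and ``comfortably subdominant to the target accuracy.'' It is the opposite: $b^{2-d}=b\cdot b^{1-d}$ \emph{exceeds} the target $b^{1-d}$ for $d\ge 3$ (and equals $1$ when $d=2$), so even if Lemma \ref{au14.1} controlled the angular density (it only compares the total masses $\mu^{D}_{z}(\calS(x_1,1))$ and $\mu^{D}_{z}(\calS(y_1,1))$), an error of that order would destroy the estimate. The influence of the second ball has to be carried through the same renewal and symmetrization bookkeeping rather than dismissed by this comparison.
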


Heuristically, \eqref{au4.2}-\eqref{au4.3} say that the exit distribution from $D$,
normalized so that its restriction to $\calS(x_1,1)$ is a probability measure,  is very close to the uniform distribution on $\calS(x_1,1)$.

\begin{proof}
{\it Step 1}.
By translation invariance we may and will suppose that $x_1=0$. 

Suppose that the lemma has been proved for all $z\in  \calS(0, b)$. If $z\in  \calS(y_1, b)$ then one can apply the strong Markov property at the hitting time of $ \calS(0, b)$ and use standard arguments to extend the claim to  $z\in  \calS(y_1, b)$. Hence, we will assume that $z\in  \calS(0, b)$. 

Let $D_1 =\calB(0, b) \setminus \ol{\calB(0,1)}$.
We will write $x= (x^1, x^2, \dots, x^d)$ for $x\in \R^d$.
Let $x_0= (b/2,0,\dots,0)$ and let $f(x) = \mu^{D_1}_{x_0}(dx)/\calU_{0}(dx)$ for $x\in \calS(0,1)$.
By rotational symmetry, $f(x)$ is a function of $x^1$ only.
We will show that $f(x)$ is a non-decreasing function of $x^1$.
Suppose that $x,y\in \calS(0,1)$ and $x^1 > y^1$ and let $M$ be the $(d-1)$-dimensional hyperplane such that $x$ and $y$ are symmetric with respect to $M$. Note that $M$ passes through the origin and, therefore, $D_1$ is symmetric with respect to $M$. The points $x_0$ and $x$ are on the same side of $M$. If we start a Brownian motion at $x_0$ and it hits $M$ then it has the same chance of exiting $D_1$ at $x$ and $y$, by symmetry. But Brownian motion starting at $x_0$ can exit $D_1$ at $x$ without hitting $M$, so $f(x) \geq f(y)$.

Let $D_2=\{x\in D_1: x^1 >0\}$,  $A_1 = \{x\in \calS(0,1): x^1 \geq 1/2\}$, $z_1 =(1,0, \dots, 0)$, and $h(x) = \mu^{D_2}_{x_0}(dx)/\calU_{0}(dx)$. We will argue that there exists a constant $c_2$ not depending on $b>4$, such that 
$h(y) > c_2 h(z_1)$ if $y\in A_1$.
Let $G(x_0, \,\cdot\,)$ be Green's function in $D_2$ and let $G_*(x) = 1-|x|^{2-d}$ if $d\geq 3$, and $G_*(x) = \log |x|$ if $d=2$. The functions $G(x_0, \,\cdot\,)$ and $G_*(\,\cdot\,)$ are positive and harmonic in $D_3 := \calB(0,2) \cap D_2$, and vanish continuously on $\{x\in\prt D_3: |x| =1, x^1 \geq 1/4\} $. Hence, by the boundary Harnack principle, there exists $c_3$ such that if $x, y\in  \{z\in D_3:
|z|\leq 3/2, z^1\geq 3/8\}$ then
\begin{align*}
\frac{G(x_0, x)}{G(x_0, y)} \geq c_3 \frac{G_*( x)}{G_*( y)}.
\end{align*}
If in addition $|x| = |y|$  then $G_*( x)=G_*( y)$ and we obtain
\begin{align*}
\frac{G(x_0, x)}{G(x_0, y)} \geq c_3 .
\end{align*}
Let $c_4$ be the surface area of $\calS(0,1)$.
The last estimate implies that for any $v\in A_1$,
\begin{align}\label{m8.1}
c_4 h(v) = \lim_{x\to v, x\in D_2} \frac{G(x_0, x)}{1- |x|}
\geq c_3 \lim_{y\to z_1, y\in D_2} \frac{G(x_0, y)}{1- |y|}
= c_3 c_4 h(z_1) .
\end{align}

Let  $D_4 = \{x\in \calB(0,b): x^1 >0\}$, $D_5 = \{x\in \R^d: x^1 >0\}$ and $A_2 =\calB(0,1) \cap \prt D_5 $. It is easy to  see that for some constant $c_5$ not depending on $b$ (for $b> 4$) and for all $x\in A_1$, we have $\mu^{D_4}_x(A_2) > c_5$. 
It follows from this and \eqref{m8.1} that
\begin{align*}
\mu^{D_4}_{x_0} (A_2) \geq \int _{A_1} \mu^{D_4}_x(A_2) \mu^{D_2}_{x_0} (dx)
\geq c_5 \mu^{D_2}_{x_0} (A_1)
\geq c_5 \calU_{0}(A_1) \inf_{x\in A_1} h(x)
\geq c_6 h(z_1).
\end{align*}
It is well known that $\mu^{D_5}_{x_0} (A_2) \leq c_7 b^{1-d}$ so
\begin{align*}
h(z_1) \leq c_6^{-1} \mu^{D_4}_{x_0} (A_2) 
\leq c_6^{-1} \mu^{D_5}_{x_0} (A_2)  \leq c_8 b^{1-d}.
\end{align*}

If Brownian motion starting from $x_0$ hits $\prt D_5$ before exiting $D_1$ then it can exit $D_1$ through $z_1$ and $-z_1$ with equal probabilities. Hence, $f(z_1) = h(z_1) + f(-z_1)$ and
\begin{align*}
 f(z_1) - f(-z_1) = h(z_1) \leq c_8 b^{1-d}.
\end{align*}
We have shown at the beginning of the proof that $f(-z_1) \leq f(x) \leq f(z_1) $
for all $x\in \calS(0,1)$ so for all $v_1,v_2\in \calS(0,1)$,
\begin{align}\label{s28.2}
|f(v_1) - f(v_2)| \leq c_8 b^{1-d}.
\end{align}
 
\medskip
{\it Step 2}.
Suppose that $B$ is Brownian motion on $\calT^d_r$ starting from $z \in \calS(0, b) \cup \calS(y_1, b)$ and let
\begin{align*}
R_0 & = 0,\\
V_k &= \inf\{ t\geq R_{k-1}: B_t \in  \calS(0, b/2) \cup \calS(y_1, b/2)\},
\qquad k\geq 1,\\
R_k &= \inf\{ t\geq V_{k}: B_t \in  \calS(0, b) \cup \calS(y_1, b) \cup
\calS(0, 1) \cup \calS(y_1, 1)
\},
\qquad k\geq 1,\\
K & = \inf\{k\geq 1: B_{R_k} \in \calS(0, 1) \cup \calS(y_1, 1) \}.
\end{align*}
Recall that $T^B(A)$ denotes the first hitting time of $A$ for any set $A$.
Standard methods show that for some $c_9 $ and all $x \in \calS(0, b) \cup \calS(y_1, b)$ and $y_1, y_2 \in \calS(0, b/2) \cup \calS(y_1, b/2)$,
\begin{align*}
\frac{\P^{x}(B_{T^B(\calS(0, b/2) \cup \calS(y_1, b/2))} \in dy_1)}
{\P^{x}(B_{T^B(\calS(0, b/2) \cup \calS(y_1, b/2))} \in dy_2)}
\leq c_9.
\end{align*}
Similarly, for some $c_{10} $ and all $x \in \calS(0, b/2) \cup \calS(y_1, b/2)$ and $y_1, y_2 \in \calS(0, b) \cup \calS(y_1, b)$,
\begin{align*}
\frac{\P^{x}(B_{T^B(\calS(0, b) \cup \calS(y_1, b) \cup
\calS(0, 1) \cup \calS(y_1, 1))} \in dy_1)}
{\P^{x}(B_{T^B(\calS(0, b) \cup \calS(y_1, b) \cup
\calS(0, 1) \cup \calS(y_1, 1))} \in dy_2)}
\leq c_{10}.
\end{align*}
Now the same argument that leads to \eqref{au2.11} in Step 2 of the proof of Lemma \ref{au4.1} yields existence of $c_{11}$ and $c_{12}$ such that for 
$y_1, y_2 \in \calS(0, b/2) \cup \calS(y_1, b/2)$ and $k\geq 1$,
\begin{align}\label{s28.3}
\frac{\P^{z}(B_{V_k} \in dy_1 \mid K > k-1)}
{\P^{z}(B_{V_k} \in dy_2\mid K > k-1)}
\geq 1-c_{11}e^{-c_{12} k}.
\end{align}
For $x\in \calS(0,1)$ and $k\geq 1$, let 
\begin{align*}
f_k(x) = \frac{\P^z(B_{R_k} \in dx; K> k-1) }
{\calU_0(dx)}.
\end{align*}
It follows from \eqref{s28.2} and \eqref{s28.3} that
for all $v_1,v_2\in \calS(0,1)$ and $k\geq 1$,
\begin{align*}
|f_k(v_1) - f_k(v_2)| \leq c_{13} b^{1-d}e^{-c_{12} k}.
\end{align*}
Note that $f_*(x) := \mu^D_z(dx)/\calU_0(dx) = \sum _{k\geq 1} f_k(x)$. The last estimate implies that 
for all $v_1,v_2\in \calS(0,1)$,
\begin{align*}
|f_*(v_1) - f_*(v_2)| \leq c_{14} b^{1-d}.
\end{align*}
This can be easily translated into \eqref{au4.2}-\eqref{au4.3}, taking into account the estimate for $\mu^{D}_{z}(\calS(x_1,1))$ given in Lemma \ref{au14.1}.
\end{proof}

\section{Invariance principle}\label{clt}

\begin{proof}[Proof of Theorem \ref{j26.2}]
\emph{Step 1}.
Suppose that $b>10$, $r> 10b^2$ and let $a$ be as in Lemma \ref{au5.2}. Recall that the state space for each of the processes $B, X$ and $Y$ is $\calT^d_r$. 
Assume that 
$\dist(X_0, Y_0) > b^2$, $\dist(B_0, X_0) \geq b$ and $\dist(B_0, Y_0) \geq b$.
Let
\begin{align*}
T_1 & = \inf\{ t\geq 0: B_t \in \calX_0 \cup \calY_0\},  \\
U_k &= \inf\{ t\geq T_{k}: B_t \in  \calS(X_t, b) \cup \calS(Y_t, b)\},\qquad k\geq 1, \\
T_k & = \inf\{ t\geq U_{k-1}: B_t \in \calX_t \cup \calY_t\}, \qquad k\geq 2,\\
k_1 & = \inf\{k: \dist(X_t, Y_t) < 2b 
\hbox{  for some  } t \leq U_k\},\\
j(X,1) & = \inf\{k\geq 1: B_{T_k} \in \calX_{T_k}\},\\
j(X,n) & = \inf\{k> j(X, n-1): B_{T_k} \in \calX_{T_k}\}, \qquad n \geq 2,\\
j(Y,1) & = \inf\{k\geq 1: B_{T_k} \in \calY_{T_k}\},\\
j(Y,n) & = \inf\{k> j(Y, n-1): B_{T_k} \in \calY_{T_k}\}, \qquad n \geq 2.
\end{align*}
Recall \eqref{j12.3} and let 
\begin{align*}
\bL^X_t &= \int_0^t \n( \calX_s,B_s) dL^X_s, \qquad
\bL^Y_t = \int_0^t \n( \calY_s,B_s) dL^Y_s,\\
\Delta_n \bL^X &= \bL^X_{U_{j(X,n)}} -\bL^X_{T_{j(X,n)}},\qquad
\Delta_n \bL^Y = \bL^Y_{U_{j(Y,n)}} -\bL^Y_{T_{j(Y,n)}}.
\end{align*}
Note that for $k< k_1$, $X_{U_k}  = X_0 + \sum _{1\leq n\leq k} \Delta_n \bL^X$,
and a similar formula holds for $Y$.

The strong Markov property and 
Lemma \ref{au5.2} imply that for $n\geq 1$ such that $j(X,n) -1 < k_1$, the conditional distribution of $B_{T_{j(X,n)}}$ given $\calF_{U_{j(X,n) -1}}$
is equal to $a \calU_{X_{U_{j(X,n) -1}}} + (1-a) \calD^X_n$, where $\calD^X_n$ is a probability distribution on $\calX_{U_{j(X,n) -1}}$, determined by the values of
$X_{U_{j(X,n) -1}}, Y_{U_{j(X,n) -1}}$ and $B_{U_{j(X,n) -1}}$.
Recall formulas \eqref{m22.1}-\eqref{au5.3} and for $x\in \calS(0,1)$ let $\calD^b_x$ be the distribution of $\bL^Z_{T^Z_{\calS(0,b)}}$, assuming that $B_0=x$.
Let $\calD^b = \int_{\calS(0,1) } \calD^b_x \calU_0(dx)$ and let $\{\bM_n^1\}_{n\geq 1}$ be a sequence of i.i.d. random vectors with distributions $\calD^b$.
Let $\{\delta_n\}_{n\geq 1}$ be i.i.d. random variables with $\P(\delta_n = 1) = 1-\P(\delta_n=0) = a$. We assume that $\{\delta_n\}_{n\geq 1}$ and $\{\bM_n^1\}_{n\geq 1}$ are independent.
We will define another process $\{\bM_n^2\}_{n\geq 1}$. Before doing so, we note that
we can and will assume that $\{\bM_n^1\}_{n\geq 1}$, $\{\bM_n^2\}_{n\geq 1}$ and $\{\delta_n\}_{n\geq 1}$ are defined on the same probability space as $(B,X,Y)$.
For every $n\geq 1$, let $\bM_n^2$ be a random vector with the conditional distribution $\int_{\calS(0,1) } \calD^b_x \calD^X_n(dx)$ given $\calF_{U_{j(X,n) -1}}$. 
Let $\bM_n = \delta_n \bM^1_n + (1-\delta_n) \bM_n^2$ for $n\geq 1$.
It is elementary to check that the sequences $\{\bM_n\}_{n\geq 1, j(X,n) -1 < k_1}$ and $\{\Delta_n \bL^X\}_{n\geq 1, j(X,n) -1 < k_1}$ have the same distributions.

Recall $\lambda_1(d,b)$ from \eqref{au9.1}.
Let $\{N^b_t, t\geq 0\}$ be a Poisson process with the rate
(expected number of jumps per unit of time) equal to $ \lambda_3(d,b):=
b^4 \lambda_1(d,b)$.
We assume that $N^b$ is independent of $\{\delta_n \bM^1_n\}_{n\geq 1}$.
Let $\{\bN ^1_t, t\geq 0\}$ be a continuous time pure jump process with values in $\R^d$, starting from 0, with jump times matching those of $N^b$. For $n\geq 1$, the $n$-th jump of $\bN^1$ is equal to $\delta_n \bM^1_n/b^2$.

Let $\bPi$ be the covariance matrix equal to  the unit diagonal matrix times $ 2 /( (d-1 ) d) $. 
We will use the invariance principle in the form given in \cite[Thm. IX 4.21]{JS} to show that the processes $\{\bN^1_t, t\geq 0\}$ converge weakly to Brownian motion with the covariance matrix $\bPi$ as $b\to \infty$. 
To apply \cite[Thm. IX 4.21]{JS}, one needs to check three conditions. 
Their condition (iii) is concerned with the initial distributions and it is clearly satisfied in our case---the initial distributions converge to the delta function at 0. Condition (i) is an assumption on the asymptotic form of the expectation and variance of the jumps. The jumps of $\bN^1$ are symmetric so the expected value of the jumps is zero. 
Note that the first coordinate $(\bM^1_n)_1  $ of  $\bM^1_n  $ has the same distribution as $ \bL^{1,Z}_{T_b}$ in Lemma 
\ref{a3.1}.
Hence, by Lemma \ref{a3.1} (i)-(ii) and \eqref{au4.3}, the variance of the first component of the limit is equal to
\begin{align*}
&\lim_{b\to \infty} b^4 \lambda_1(d,b) \E(\delta_1 (\bM^1_n)_1/b^2)^2
= \lim_{b\to \infty}  \lambda_1(d,b) a \E( (\bM^1_n)_1 )^2 \\
& 
=  \lim_{b\to \infty}  \lambda_1(d,b) a \E( \bL^{1,Z}_\infty )^2 
= \lim_{b\to \infty}  \lambda_1(d,b) a \lambda_2(d,b)
= \frac{2 }{ (d-1 ) d}.
\end{align*}
The covariance structure of the limit is represented by a constant multiple of the unit diagonal matrix because of the rotational symmetry of $\bM^1_n$. Finally, the Lindeberg-Feller-type condition (ii) in \cite[Thm. IX 4.21]{JS} has been verified in Lemma \ref{a3.1} (iii). We conclude that  processes $\{\bN^1_t, t\geq 0\}$ converge weakly to Brownian motion with the covariance matrix $\bPi$.

We define $\{\wt \bM_n^1\}_{n\geq 1}$, $\{\wt\bM_n^2\}_{n\geq 1}$, $\{\wt \delta_n\}_{n\geq 1}$ and $\{\wt\bM_n\}_{n\geq 1}$ relative to $Y$
in the same way as
$\{\bM_n^1\}_{n\geq 1}$, $\{\bM_n^2\}_{n\geq 1}$, $\{\delta_n\}_{n\geq 1}$ and $\{\bM_n\}_{n\geq 1}$ were defined relative to $X$.
We can and will assume that $\{\wt \bM_n^1\}_{n\geq 1}$ and $\{\wt \delta_n\}_{n\geq 1}$ are independent of $\{ \bM_n^1\}_{n\geq 1}$ and $\{ \delta_n\}_{n\geq 1}$.
We assume that all processes 
$\{\bM_n^1\}_{n\geq 1}$, $\{\bM_n^2\}_{n\geq 1}$, $\{\delta_n\}_{n\geq 1}$, $\{\bM_n\}_{n\geq 1}$, $\{\wt \bM_n^1\}_{n\geq 1}$, $\{\wt\bM_n^2\}_{n\geq 1}$, $\{\wt \delta_n\}_{n\geq 1}$ and $\{\wt\bM_n\}_{n\geq 1}$  are defined on the same probability space as $(B,X,Y)$.

Recall the process $N^b$.
We assume that $N^b$ is independent of $\{\wt\delta_n \wt\bM^1_n\}_{n\geq 1}$.
Let $\{\wt \bN ^1_t, t\geq 0\}$ be constructed from $\{\wt\delta_n \wt\bM^1_n\}_{n\geq 1}$ and $N^b$ in the same way as $\{ \bN ^1_t, t\geq 0\}$ was constructed from $\{\delta_n \bM^1_n\}_{n\geq 1}$ and $N^b$. Note that we use the same Poisson process $N^b$ in both cases. 

Let $\{(W^X_t, W^Y_t), t\geq 0\}$ be a pair of independent $d$-dimensional Brownian motions, each with variance $ 2/ ( (d-1 ) d) $. It follows from independence of $\{\wt\delta_n \wt\bM^1_n\}_{n\geq 1}$ and $\{\delta_n \bM^1_n\}_{n\geq 1}$ that $\{(\bN^1_t, \wt \bN^1_t), t\geq 0\}$ converge weakly to
$\{(W^X_t, W^Y_t), t\geq 0\}$.

Let $\{\bN ^2_t, t\geq 0\}$ be a continuous time pure jump process with values in $\R^d$, starting from 0, with jump times matching those of $N^b$. For $n\geq 1$, the $n$-th jump of $\bN^2$ is equal to $(1-\delta_n) \bM^2_n/b^2$.
 We obtain from \eqref{a8.1} that for $b>2$ and $d\geq 2$,
\begin{align*}
\E|\bM^2_n|^2\leq d^2(2 + 2 c_1 \lambda_2(d,b)) \leq c_2 +  c_3 \log b.
\end{align*}
According to \eqref{au4.3}, $\P( 1-\delta_k \ne 0) \leq c_4 b^{1-d}$. It follows that, for large $b$,
\begin{align*}
\E\left| (1-\delta_n) \bM^2_n/b^2\right|^2 \leq 
 c_5 b^{1-d} 
(c_2 +  c_3 \log b) /b^4  )
\leq c_6 b^{-3-d}\log b.
\end{align*}
Recall that $ \lambda_3(d,b)=
b^4 \lambda_1(d,b)$.
Let $n(b) = \lceil \lambda_3(d,b) \rceil $ and note that $n(b) \leq 2 (d-2) b^4 $ for large $b$. It is easy to see that  $\{|\bM_n^2|\}_{n\geq 1}$ are i.i.d. By Doob's maximal inequality,
\begin{align*}
\P&\left(\sup_{1 \leq n \leq n(b)} \sum_1^n (1-\delta_n) |\bM^2_n|/b^2 \geq 
b^{-1/2}\right)
\leq b^{1/2} n(b) \E\left| (1-\delta_n) \bM^2_n/b^2\right|^2\\
&\leq 2 (d-2) b^{9/2} c_6 b^{-3-d}\log b
= c_7 b^{3/2-d} \log b \leq c_7 b^{-1/2} \log b
\xrightarrow{b\to \infty} 0.
\end{align*}

Note that $\bN^2$ will have about $n(b)$ jumps by time 1. Now
standard arguments show that $\{\bN^2_t, t\in[0,1]\}$ converge to the process identically equal to 0 as $b\to \infty$. A similar argument shows that for every fixed $t_1 < \infty$,
$\{\bN^2_t, t\in[0,t_1]\}$ converge to the process identically equal to 0 as $b\to \infty$. 

Let $\{\wt \bN ^2_t, t\geq 0\}$ be defined relative to $\{(1-\wt\delta_n) \wt\bM^2_n\}_{n\geq 1}$ in the same way as $\{ \bN ^2_t, t\geq 0\}$ was defined relative to $\{(1-\delta_n) \bM^2_n\}_{n\geq 1}$. In both cases we use the same Poisson process $N^b$. By analogy, for every fixed $t_1 < \infty$, $\{\wt \bN^2_t, t\in[0,t_1]\}$ converge to the process identically equal to 0 as $b\to \infty$.

Let $\bN_t = \bN^1_t + \bN^2_t$ and $\wt\bN_t = \wt\bN^1_t + \wt\bN^2_t$. Combining the results on convergence of $\{(\bN^1_t, \wt \bN^1_t), t\geq 0\}$, $\{\bN^2_t, t\in[0,t_1]\}$ and $\{\wt\bN^2_t, t\in[0,t_1]\}$, we see that $\{(\bN_t, \wt \bN_t), t\geq 0\}$ converge weakly to
$\{(W^X_t, W^Y_t), t\geq 0\}$ as $b\to \infty$.

Recall the definition of $T_k$ from the beginning of the proof
and let 
\begin{align*}
\wh N^{X,b}_t  &= \inf \{ k: \sigma^X_{T_k} \geq t/b^4\}, \quad
\wh N^{Y,b}_t  = \inf \{ k: \sigma^Y_{T_k} \geq t/b^4\},\quad
\wh N^b_t  = \inf \{ k: \sigma_{T_k} \geq t/b^4\}.
\end{align*}

It follows from \eqref{m9.3}, \eqref{a3.2}, \eqref{au9.1} and the definition of $N^b$ that $\{\wh N^b_t, t\geq 0\}$ and $\{ N^b_t, t\geq 0\}$ have the same distributions. It is standard to prove that for every $t_1< \infty$, a.s.,
\begin{align}\label{au10.1}
\lim_{b\to \infty}
\sup_{0 \leq t \leq t_1}
\frac{ |\wh N^b_t - N^b_t|}{\lambda_3(d,b)} = 0.
\end{align}

Note that $\wh N^{X,b} + \wh N^{Y,b} = \wh N^b$.
By Lemma \ref{au14.1}, for any arbitrarily small $\eps>0$ there exists $b_1$ such that for $b> b_1$ and all $n\geq 1$,  the time of the $n$-th jump of $\wh N^b$ is equal to a jump time of $\wh N^{X,b}$ with probability in the range $(1/2-\eps, 1/2+\eps)$. This holds conditional on the times of jumps of $\wh N^{X,b} , \wh N^{Y,b} $ and $ \wh N^b$ before the time of the $n$-th jump of $\wh N^b$.
These observations and \eqref{au10.1} imply easily that for every $t_1< \infty$, a.s.,
\begin{align}\label{au11.2}
\lim_{b\to \infty}
\sup_{0 \leq t \leq t_1}
\frac{ |\wh N^{X,b}_t - N^b_t/2|}{\lambda_3(d,b)} = 
\lim_{b\to \infty}
\sup_{0 \leq t \leq t_1}
\frac{ |\wh N^{Y,b}_t - N^b_t/2|}{\lambda_3(d,b)} =0.
\end{align}

Let $\{\bR_t, t\geq 0\}$ be a pure jump process with the same jumps as those of the process $\{\bN_t, t\geq 0\}$ except that their times are determined by the jumps of $\wh N^{X,b}$ rather than $N^b$. 
We define $\{\wt\bR_t, t\geq 0\}$ in a similar way relative to $\{\wt\bN_t, t\geq 0\}$ and $\wh N^{Y,b}$.
 It follows from convergence of $\{(\bN_t, \wt \bN_t), t\geq 0\}$ and \eqref{au11.2} that $\{(\bR_t, \wt \bR_t), t\geq 0\}$ converge weakly to
$\{\sqrt{2}(W^X_t, W^Y_t), t\geq 0\}$ as $b\to \infty$.

Recall that the sequences $\{\bM_n\}_{n\geq 1, j(X,n) -1 < k_1}$ and $\{\Delta_n \bL^X\}_{n\geq 1, j(X,n) -1 < k_1}$ have the same distributions.
It follows that we could construct copies of $\{\bR_t, t\geq 0\}$
and $\{b^{-2}\bL^X( \sigma^X_{b^4 t}), t\geq 0\}$ on the same probability space so
that $\bR_{T_{j(X,n)}} = b^{-2}\bL^X( \sigma^X_{b^4 T_{j(X,n)}})$ for $n\geq 1, j(X,n) -1 < k_1$. The process $\wh N^{X,b}$ has about $\lambda_3(d,b)$  jumps per unit of time.
We bound the difference between the processes $\bR_t$ and $b^{-2}\bL^X( \sigma^X_{b^4 t})$ as follows.
\begin{align*}
\sup_{0 \leq t \leq U_{\lceil \lambda_3(d,b) \rceil \land k_1}} |\bR_t - b^{-2}\bL^X( \sigma^X_{b^4 t})|
&\leq \sup _{1\leq n\leq \lceil \lambda_3(d,b) \rceil}
\sup_{T_{j(X,n)} \leq t \leq U_{j(X,n)}}
b^{-2}|\bL^X_{t} -\bL^X_{T_{j(X,n)}}|\\
&\leq \sup _{1\leq n\leq \lceil \lambda_3(d,b) \rceil}
\sup_{T_{j(X,n)} \leq t \leq U_{j(X,n)}}
b^{-2}|L^X_{t} -L^X_{T_{j(X,n)}}|\\
&= \sup _{1\leq n\leq \lceil \lambda_3(d,b) \rceil}
b^{-2}(L^X_{U_{j(X,n)}} -L^X_{T_{j(X,n)}}).
\end{align*}
By \eqref{m9.3}, \eqref{a3.2} and \eqref{au9.1}, the distribution of $L^X_{U_{j(X,n)}} -L^X_{T_{j(X,n)}}$ is exponential with mean $1/ \lambda_1(d,b)$. Hence,
\begin{align*}
\P&\left(\sup _{1\leq n\leq \lceil \lambda_3(d,b) \rceil}
b^{-2}(L^X_{U_{j(X,n)}} -L^X_{T_{j(X,n)}})
\geq b^{-1/2}\right)\\
&\leq
\lceil \lambda_3(d,b) \rceil
\P\left(L^X_{U_{j(X,n)}} -L^X_{T_{j(X,n)}}
\geq b^{3/2}\right)
 = \lceil \lambda_3(d,b) \rceil \exp( - \lambda_1(d,b) b^{3/2}).
\end{align*}
The last quantity goes to 0 as
$b \to \infty$ so
\begin{align*}
\lim_{b\to \infty}
\P&\left(\sup_{0 \leq t \leq U_{\lceil \lambda_3(d,b) \rceil \land k_1}} |\bR_t - b^{-2}\bL^X( \sigma^X_{b^4 t})|
\geq b^{-1/2}\right)=0.
\end{align*}
For the same reason we have
\begin{align*}
\lim_{b\to \infty}
\P&\left(\sup_{0 \leq t \leq U_{\lceil \lambda_3(d,b) \rceil \land k_1}} |\wt\bR_t - b^{-2}\bL^Y( \sigma^Y_{b^4 t})|
\geq b^{-1/2}\right)=0.
\end{align*}
Note that the last two formulas still hold if we replace $\lambda_3(d,b)$ with any constant multiple of $\lambda_3(d,b)$. This and 
the weak convergence of 
$\{(\bR_t, \wt \bR_t), t\geq 0\}$  to
$\{\sqrt{2}(W^X_t, W^Y_t), t\geq 0\}$ 
imply that $\{(b^{-2}\bL^X( \sigma^X_{b^4 t}),b^{-2}\bL^Y( \sigma^Y_{b^4 t})), t\geq 0\}$ converge to $\{\sqrt{2}(W^X_t, W^Y_t), t\geq 0\}$ 
as $b\to \infty$. 

\medskip

We will now discuss a few technical points that were partly
swept under the rug in the proof so far.
First, we have just made a claim of convergence of some processes
on the half-line although the construction of stopping times used
in the proof stops at $U_{k_1}$. We assumed that $\dist(X_0, Y_0) > b^2$. At time $U_{k_1}$, the processes $X$ and $Y$ are about $2b$ units apart. After rescaling by $b^{-2}$, this corresponds to the time when $b^{-2}X$ and $b^{-2}Y$ starting at a distance greater than $1$ come closer than $2/b$ units apart. Since $d$-dimensional Brownian motion does not hit a fixed point for $d\geq 2$, this time goes to infinity in probability as $b\to \infty$.
This justifies the assertion that convergence holds on the whole time half-line $[0,\infty)$.

We can drop the assumption that $\dist(B_0, X_0) \geq b$ and $\dist(B_0, Y_0) \geq b$
as follows. The assumption is satisfied at the time $U_1$ so the invariance principle holds for the post-$U_1$ process. The amount of local time and the maximal displacement of the processes on the interval $[0, U_1]$ can be easily estimated using the same methods that were used to estimate $\Delta_n \bL^X$. The estimates show that the initial part of the process, on the time interval $[0, U_1]$, will disappear in the limit of rescaled processes.

We can now change the clocks from $\sigma^X_t$ and $\sigma^Y_t$ to the common clock $\sigma_t$ due to \eqref{au11.2}. We conclude that 
$\{(b^{-2}\bL^X( \sigma_{b^4 t}),b^{-2}\bL^Y( \sigma_{b^4 t})), t\geq 0\}$ converge to $\{(W^X_t, W^Y_t), t\geq 0\}$ 
as $b\to \infty$. It is straightforward to check that this implies the theorem under the assumption that for each $n$,  $\dist(X_0, Y_0) > n^{-1/2}$. 

We note that the same proof would apply if for some fixed $c_8>0$ and all $n$ we assumed that $\dist(X_0, Y_0) > c_8 n^{-1/2}$.

We also note that our estimates are uniform in the sense that they do not depend on the initial positions of $X,Y$ and $B$. We will make this claim more precise. Recall that the Prokhorov metric is a way to metrize weak convergence. For every $T,\eps, c_9>0$ there exists $n_1$ such that for all $n\geq n_1$, all $x_1, y_1$ and $z_1$ such that $\dist(x_1, y_1) \geq c_9 n^{-1/2}$, $\dist(x_1, z_1)\geq 1$ and $\dist(y_1, z_1)\geq 1$, if $X_0=x_1$, $Y_0=y_1$ and $B_0=z_1$ then 
the Prokhorov distance between
 $\{C_d\, n^{-1/2} (\fX^r_{ \sigma_{nt}} -\fX^r_0, \fY^r_{\sigma_{nt}} - \fY^r_0), t\in [0,T] \}$ and standard $(2d)$-dimensional Brownian motion on $[0,T]$ is less than $\eps$.

\medskip
\emph{Step 2}. 
We will show that for any $p_0<1$ and $t_0>0$ there exist $n_0$ and $\gamma>0$ such that for any $n\geq n_0$ and any starting point $(B_0, X_0, Y_0)$ satisfying the usual conditions $|B_0 - X_0| \geq 1$, $|B_0 - Y_0| \geq 1$ and $|X_0 - Y_0| \geq 2$, the process 
$|n^{-1/2} (\fX_{ \sigma_{nt}} - \fY_{\sigma_{nt}})|$ will become greater than $\gamma$ in at most $t_0$ units of time with probability greater then $p_0$.

Let
\begin{align*}
A_t &=\fX_{ \sigma_t} - \fY_{\sigma_t},\\
T^1_k &= \inf\{t\geq 0: A_t \notin \calB(0, 2^k)\},\\
T^2_k &= \inf\{t\geq 0: A_t \notin \calB(0, 2^k) \setminus \calB(0, 2^{k-2})\}.
\end{align*}
The process $A_t$ is not Markovian but the process $(A_t, X_{ \sigma_t} - B_{\sigma_t})$ is. We will write $\P^{x,y}$ to denote the distribution of  
$\{(A_t, X_{ \sigma_t} - B_{\sigma_t}),t\geq0\}$ starting from $(A_0, X_{ \sigma_0} - B_{\sigma_0})=(x,y)$.
The last remark in Step 1 and standard Brownian estimates show that
there exist $p_1,p_2>0$ and $k_2 $ such that for $k \geq k_2$ and $|y|\geq 1$,
\begin{align}\label{a10.1}
&\P^{x,y}( T^1_{k} =T^2_{k}) \geq p_1 \qquad \hbox{ for } 
x \notin\calB(0, 2^{k-1}),\\
&\P^{x,y}(T^2_{k} \leq 2^{2k} ) \geq p_2
\qquad \hbox{ for } 
x \in \calB(0, 2^k) \setminus \calB(0, 2^{k-2}). \label{a10.2}
\end{align}

Note that we can take $p_1$ to be any number less than $1/2$ for any $d$, so we will assume that $p_1 = 3/8$. 

By applying the Markov property at times $j 2^{2k}$, $j=1,2,\dots$, and
\eqref{a10.2}, we see that for some $c_{10}$ and $k\geq k_2$,
\begin{align}\label{a10.3}
\E^{x,y}T^2_{k} \leq c_{10} 2^{2k}\qquad \hbox{ for } 
x \in \calB(0, 2^k) \setminus \calB(0, 2^{k-2}).
\end{align}
We will show that there exists $c_{11} >0$ such that
for all $k \geq 0$, $x\in \ol{\calB(0, 2^k)}$ and $|y|\geq 1$, 
\begin{align}\label{a10.4}
\E^{x,y}T^1_{k} \leq c_{11} 2^{2 k}.
\end{align}
The proof will be based on induction.
 For all $k$, let
\begin{align*}
T^3_{k} = T^1_{k} \bone_{\{ T^1_{k} =T^2_{k}\}}
+ T^1_{k-1}\circ \theta_{T^2_{k}} \bone_{\{ T^1_{k} \ne T^2_{k}\}}
= T^2_{k} \bone_{\{ T^1_{k} =T^2_{k}\}}
+ T^1_{k-1}\circ \theta_{T^2_{k}} \bone_{\{ T^1_{k} \ne T^2_{k}\}},
\end{align*}
where $\theta$ denotes the usual Markovian shift operator.
Suppose that \eqref{a10.4} holds for $k_2,k_2+1, \dots, k-1$ (the value of $c_{11}$ will be specified later). In particular, we assume that \eqref{a10.4} holds for $k-1$ and
$x\in \calS(0,2^{k-1})$.
Then, by \eqref{a10.3} and \eqref{a10.4},
\begin{align}\label{a10.5}
\E^{x,y} T^3_{k} \leq c_{10} 2^{2k} + c_{11} 2^{2( k-1)}.
\end{align}
Let
\begin{align*}
T^4_1 &= T^3_{k},\\
T^4_j &= T^3_{k} \circ \theta_{ T^4_{j-1}}, \qquad j\geq 2,\\
K&= \min\{j: T^4_j = T^1_{k}\}.
\end{align*}
The distribution of $K$ is majorized by the geometric distribution with mean $1/p_1$, by \eqref{a10.1}. This, the strong Markov property applied at $T^4_j$'s and \eqref{a10.5} imply that
\begin{align}\label{a10.8}
\E^{x,y} T^1_{k} \leq (c_{10} 2^{2k} + c_{11} 2^{2 (k-1)})/p_1.
\end{align}
To complete the inductive step, we need to find $c_{11}$  such that
the last expression is less than or equal to $c_{11} 2^{2k}$. In other words, we want to have
\begin{align}\label{a10.7}
(c_{10} 2^{2k} + c_{11} 2^{2(k-1)})/p_1 \leq c_{11} 2^{2k}.
\end{align}
The following inequality is equivalent,
\begin{align}\label{a10.6}
c_{11}( 1 -1/(4p_1)) - (c_{10}/p_1) \geq 0.
\end{align}
Since $p_1 = 3/8$, we can choose $c_{11}$ so large that  \eqref{a10.6} and, therefore, \eqref{a10.7} hold. We combine this with \eqref{a10.8} to conclude that
$\E^{x,y} T^1_{k} \leq  c_{11} 2^{2k}$ which concludes the inductive step.

To initialize the inductive proof, it suffices to show that \eqref{a10.4} holds for $k=k_2$ (then \eqref{a10.4} holds for all $k\leq k_2$ with $c_{11}$ replaced by
$c_{11} 2^{2k_2}$). We only sketch the proof. If $|A_0| \leq 2^{k_2}$, it is easy to construct a deterministic smooth trajectory such that if we use it as the driving path in place of $B_t$ then $|A_t|$ will exceed $ 2^{k_2 +1}$ in no more than $ 2^{2k_2}$ units of time. By the support theorem (see \cite[Thm.~I.6.6]{Bassbook}) and
the continuity of the  Skorokhod map (see \cite[Thm.~1.1]{LS}),
with probability $p_3>0$ not depending on the starting point, 
if the driving process $B_t$ is Brownian motion then $|A_t|$ will exceed $ 2^{k_2 }$ in no more than $ 2^{2k_2+1}$ units of time.
Applying the Markov property at times $j 2^{2k_2+1}$, $j\geq 1$, we conclude that the expected value of the time when $|A_t|$  exceeds $ 2^{k_2 }$ is bounded by $2^{2k_2+1}/p_3$. This implies \eqref{a10.4} for $k=k_2$ (but we may have to enlarge $c_{11}$).

Recall that we fixed a $p_0<1$ at the beginning of the proof.
It follows from \eqref{a10.4} that 
for all $k \geq 0$, $x\in \ol{\calB(0, 2^k)}$ and $|y|\geq 1$, 
\begin{align*}
\P^{x,y} (T^1_{k} \geq c_{11} 2^{2 k}/(1-p_0)) \leq 1-p_0.
\end{align*}
By scaling, the claim made at the very beginning of Step 2 follows if we take $\gamma=((1-p_0)t_0/c_{11})^{1/2}$.

\medskip

It remains to combine the claims proved in Steps 1 and 2. According to Step 2, irrelevant of the starting position of $X,Y$ and $B$, the process $|n^{-1/2} (\fX_{ \sigma_{nt}} - \fY_{\sigma_{nt}})|$ will reach a small fixed distance in a small fixed time. After that time, we use the invariance 
principle in the form proved at the end of Step 1.
\end{proof}

\section{Irreducibility}
\label{sec:irreducibility}

The argument presented in this section is a straightforward
adaptation of the proof of \cite[Thm.~6.1]{BBCH} so we will omit many details.

 Let
$\P^{z,x,y}$ denote the distribution of $(B_t, X_t, Y_t)$ 
starting from $(z,x,y)$. 

\begin{lemma}\label{jl28.2}
Fix any $d\geq 2$ and $r>10$.
There exists a positive measure $\mu$ on $(\calT^d_r)^3$ and $t_0>0$ such that if $\mu (\Gamma) > 0$, then for all
$(z,x,y) \in (\calT^d_r)^3$
such that $|z-x| \geq 1$, $|z-y| \geq 1$ and $|x-y| \geq 2$, we have $\P^{z,x,y} ((B_{t_0},X_{t_0},Y_{t_0}) \in \Gamma)
> 0$.
\end{lemma}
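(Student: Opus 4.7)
The plan is to adapt the Stroock--Varadhan support-theorem argument, as in \cite[Thm.~6.1]{BBCH}, to the three-body reflecting setting. I aim to produce a single time $t_0$ and a single open set $U \subset (\calT^d_r)^3$ such that, for every valid starting configuration $(z,x,y)$, the transition kernel $P_{t_0}((z,x,y),\,\cdot\,)$ has a nontrivial absolutely continuous component supported in $U$ with a positive density there. Taking $\mu$ to be Lebesgue measure restricted to $U$ then proves the lemma, since $\mu(\Gamma)>0$ forces $\Gamma\cap U$ to have positive Lebesgue measure and hence positive mass under any kernel that dominates a positive density on $U$.

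First, I would fix a generic target configuration $(z^*, x^*, y^*)$ in which all three pairwise distances are strictly larger than the minimum admissible values (say each pair separated by more than $3$), and let $U$ be a small open ball around this configuration, small enough that every point of $U$ is itself a valid configuration. For an arbitrary valid starting point $(z_0, x_0, y_0)$ I would then construct a piecewise-smooth path $\phi : [0, t_0] \to \calT^d_r$ with $\phi(0) = z_0$ which, when used in place of $B$ in the construction of Section \ref{prelim1}, yields deterministic trajectories $(\phi, \wt X, \wt Y)$ with $(\wt X_{t_0}, \wt Y_{t_0}) = (x^*, y^*)$ and $\phi(t_0) = z^*$. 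The path is built in a finite number of geometric stages: first $\phi$ presses against $\calX$ from the correct normal direction and translates $\calX$ to $x^*$; then $\phi$ parks $\calX$ and analogously pushes $\calY$ to $y^*$; finally $\phi$ steers itself, without touching either ball, to $z^*$. Because the torus has fixed bounded diameter and the valid-configuration set is compact, a single $t_0$ can be chosen to work for every starting point.

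Next, I would invoke the support theorem (\cite[Thm.~I.6.6]{Bassbook}, used already in Step 2 of the proof of Theorem \ref{j26.2}) together with continuity of the Skorokhod reflection map (\cite[Thm.~1.1]{LS}) to transfer the existence of $\phi$ to a positive lower bound for $\P^{z_0,x_0,y_0}$. The support theorem gives strictly positive probability that $B$ stays within any prescribed sup-norm distance $\delta$ of $\phi$ on $[0, t_0]$; continuity of the reflection map then forces the resulting $(X_{t_0}, Y_{t_0})$ to lie within $\eps(\delta)\to 0$ of $(x^*, y^*)$. To upgrade ``lands near $(z^*,x^*,y^*)$'' into ``has a positive density component on $U$'', I would arrange the final stage of $\phi$ so that throughout, say, the last $t_0/2$ units of time the three objects are pairwise separated by more than $2+\eta$ for a fixed $\eta>0$. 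Over this window, the law of $(B_{t_0}, X_{t_0}, Y_{t_0})$ conditional on the prior trajectory admits a strictly positive density with respect to Lebesgue measure on a neighborhood of $(z^*, x^*, y^*)$, since during this window the dynamics reduces to a nondegenerate combination of free Brownian motion of $B$ and transverse reflective pushes on $X$ and $Y$ at prescribed contact angles.

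The main obstacle is the continuity of the Skorokhod map for the three-body problem, which is defined piecewise across the switching times $T_1, T_2, \ldots$ of Section \ref{prelim1}. The argument is made robust by choosing $\phi$ so that every switching event along the deterministic trajectory is \emph{transverse}: at the instant $\phi$ transfers from pushing $\calX$ to pushing $\calY$, $\phi$ strictly enters $\calY$ and strictly separates from $\calX$, and analogously at later switches. Under this transversality, small uniform perturbations of the driver produce correspondingly small perturbations of the switching times, so the piecewise reflection construction depends continuously on the driver in the sup norm on a neighborhood of $\phi$. Once this continuity is in hand, the rest of the argument proceeds exactly as in \cite[Thm.~6.1]{BBCH}, and the details (which we omit, following the preamble to the section) are a straightforward adaptation.
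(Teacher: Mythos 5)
The first half of your argument (deterministic driver path, support theorem, continuity of the Skorokhod map) matches the paper's Step leading to \eqref{jl28.1} and is fine. The gap is in the second half: you assert that over a final window the conditional law of $(B_{t_0},X_{t_0},Y_{t_0})$ ``admits a strictly positive density \dots{} since the dynamics reduces to a nondegenerate combination of free Brownian motion of $B$ and transverse reflective pushes on $X$ and $Y$.'' This is exactly the hard part of the lemma, and as stated your mechanism cannot work: if during the last window the three objects are pairwise separated as you prescribe, then $B$ never touches $\calX_t\cup\calY_t$ there, the local times $L^X,L^Y$ do not increase, and $(X_{t_0},Y_{t_0})$ is \emph{frozen} --- its conditional law is a point mass, not absolutely continuous. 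Conversely, when $B$ does push the balls, the displacements are the singular functionals $-\int\n(\calX_s,B_s)\,dL^X_s$ and $-\int\n(\calY_s,B_s)\,dL^Y_s$, and the absolute continuity of this $2d$-dimensional vector is precisely what must be proved, not assumed; the support theorem only localizes the law, it gives no density.

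The paper supplies the missing mechanism by forcing $B$ to visit $d$ small spherical caps $C^{j,X}_t$ on $\calX_t$ (and likewise on $\calY_t$) centered in linearly independent directions, so that each ball's displacement decomposes as a sum of vectors confined to narrow cones $C^j(\delta_0)$; the excursion-theoretic argument of \cite[Thm.~6.1]{BBCH} shows the vector of accumulated local times $(L^{1,X},\dots,L^{d,Y})$ has a component with a positive joint density on $(0,1/(2d))^{2d}$, and the volume estimates for the set $\Lambda(\cdot)$ transfer that density to the displacement vector $\bK$, hence to $(X_{5t_1},Y_{5t_1})$; the density in the $B$-coordinate then comes from letting $B$ run freely at the end. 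Without some version of this ``$d$ independent push directions per ball plus a density for the local-time amounts'' argument, your proof establishes only that the transition kernel charges every neighborhood of the target configuration, which does not yield a measure $\mu$ with the stated property.
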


\begin{proof}

Suppose that we replace Brownian motion $B$ with a continuous function $\{A_t, t\geq 0\}  $ in \eqref{m9.2}-\eqref{j12.2}. These equations have solutions according to \cite{LS}. Let $(A_t, X^A_t, Y^A_t)$ be the resulting triplet of processes. We proved in Section \ref{prelim1} that the processes are defined until the accumulation time of visits of $A$ to the unit spheres centered at $X^A$ and $Y^A$. We will consider only functions $A$ such that there is no such finite accumulation time.

Fix some $u_1, x_1, y_1 \in \calT^d_r$ and assume that for every pair of these points, the distance between them is greater than 5.
Consider any $A_0, X_0, Y_0 \in \calT^d_r$ with $|X_0 - Y_0| \geq 2$, $|A_0 - X_0| \geq 1$ and $|A_0 - Y_0| \geq 1$. 
Let $\alpha = 1/(20 d)$.
It is elementary to see that one can find a continuous function $\{A_t, t\geq 0\}  $ and a time $t_1< \infty$ not depending on $u_1,x_1,y_1, A_0, X_0, Y_0$ (but possibly depending on $d$ and $r$) such that there exists $t_2 \leq t_1$ with the property that $A_{t_2} \in \calB(u_1, \alpha)$, $X^A_{t_2} \in \calB(x_1, \alpha)$ and $Y^A_{t_2} \in \calB(y_1, \alpha)$. We briefly justify this claim.
If the spheres $\calX_0$ and $\calY_0$ touch or are very close to each other then
the function $A$ has to start by ``pushing them apart.'' Then $A$ has to push the spheres in the right direction, one at a time.
By the continuity of the  Skorokhod map (see \cite[Thm.~1.1]{LS}), there exists $\eps_1>0$ such that  if a continuous function $C_t$ satisfies $|A_t - C_t| \leq \eps_1$ for all $t\in [0,t_1]$, then 
$|(A_t, X^A_t, Y^A_t) - (C_t, X^C_t , Y^C_t)| \leq \alpha$ for $t\in [0,t_1]$.
The support theorem (see \cite[Thm.~I.6.6]{Bassbook}) implies that for any continuous function $\{A_t, t\geq 0\}  $, if $B_0 = A_0$ then  $P(\sup_{0\leq t \leq t_1}|B_t - A_t| < \eps_1)>0$. 
We conclude that $P(|(A_t, X^A_t, Y^A_t) - (B_t, X_t , Y_t)| \leq \alpha)>0$ and, therefore, if $B_0, X_0, Y_0 \in \calT^d_r$ with $|X_0 - Y_0| \geq 2$, $|B_0 - X_0| \geq 1$ and $|B_0 - Y_0| \geq 1$ then $B_{t_2} \in \calB(u_1, 2\alpha)$, $X_{t_2} \in \calB(x_1, 2\alpha)$ and $Y_{t_2} \in \calB(y_1, 2\alpha)$ with positive probability.  It is easy to see that the last claim implies that
\begin{align}\label{jl28.1}
\P(B_{t_1} \in \calB(u_1, 3\alpha), X_{t_1} \in \calB(z_1, 3\alpha), Y_{t_1} \in \calB(y_1, 3\alpha) ) >0.
\end{align}

Let
$\angle(v,w)$ denote the angle between vectors $v$ and $w$ and 
recall that  $\be_k$ is the $k$-th vector in the usual orthonormal basis for $\R^d$.
Let
 $C^j(\delta_0) = \{v
\in \R^d: \angle(\be_j, v) \leq \delta_0\}$. Fix
$\delta_0>0$  so small that for any $v_j \in C^j(2\delta_0)$, $j=1, \dots,
d$, the vectors $\{v_j\}$ are linearly independent. 
Let $C^{j,X}_t = X_t + (C^j(\delta_0) \cap \calX_t)$; this set is a small spherical cap on $\calX_t$, with center in the direction $\be_j$ from $X_t$.

Let $F_X$ be the event that all of the following conditions hold: (i) Brownian motion $B$ visits the (random and time dependent) sets
 $C^{j,X}_t$, $j=1,2, \dots , d$, in this order, between times $t_1$ and $2t_1$; (ii) $B$ does not visit any other part of $\calX_t \cup \calY_t$ during $[t_1, 2 t_1]$; (iii) the local time $L^X$ increases less  than $1/(2d)$ when $B$ is hitting $C^{j,X}_t$ during $[t_1, 2 t_1]$, for each $j=1,2, \dots , d$.

We define $C^{j,Y}_t$ and $F_Y$ in an analogous way except that $B$ 
is required to visit $C^{j,Y}_t$'s during $[3t_1, 4t_1]$.

Let $F_B$ be the event that all of the following conditions are satisfied: (i) $B$ hits $\calB(u_1,1)$ between $4t_1$ and $5t_1$; (ii) $B$ does not hit $\calX \cup \calY$ between 
the last visit to $C^{d,X}_t$ during $[t_1, 2t_2]$ and the first visit to $C^{1,Y}_t$ during $[3t_2, 4t_2]$; (iii) $B$ does not visit $\calX_t \cup \calY_t$ between 
the last visit to $C^{d,Y}_t$ during $[3t_2, 4t_2]$ and hitting of $\calB(u_1, 1)$.  

The probability of $F_X \cap F_Y \cap F_B$ is strictly positive due to the support theorem and excursion theory.

Let 
\begin{align*}
K^{j,X}& =   \int_{t_1}^{2t_1} \n(\calX_s, B_s)\bone_{\{B_s \in C^{j,X}_s\}} dL^X_s, 
\qquad L^{j,X} =   \int_{t_1}^{2t_1} \bone_{\{B_s \in C^{j,X}_s\}} dL^X_s,\\
K^{j,Y} &=   \int_{3t_1}^{4t_1} \n(\calY_s, B_s)\bone_{\{B_s \in C^{j,Y}_s\}} dL^Y_s,
\qquad L^{j,Y} =   \int_{3t_1}^{4t_1} \bone_{\{B_s \in C^{j,Y}_s\}} dL^X_s,
\end{align*}
and note that $K^{j,X}, K^{j,Y} \in C^j(\delta_0)$ for all $j=1, \dots, d$.

The components of the random vector 
\begin{align*}
\bK := (K^{1,X}, \dots, K^{d,X}, K^{1,Y}, \dots, K^{d,Y})
\end{align*}
are not independent but the fact that $F_X \cap F_Y \cap F_B$ has a positive probability and the excursion theory based argument given in the proof of \cite[Thm.~6.1]{BBCH}
show that the distribution of 
\begin{align*}
 (L^{1,X}, \dots, L^{d,X}, L^{1,Y}, \dots, L^{d,Y})
\end{align*}
 has a component with a density strictly positive on $(0, 1/(2d))^{2d}$.
For $0 \leq a_j < b_j$, $j = 1,2, \dots, 2d$, 
let 
 $\Lambda([a_1,b_1], [a_2, b_2], \dots, [a_{2d},b_{2d}])
 $
be the set of all possible values of
$K^{1,X}+ \dots+ K^{d,X}+ K^{1,Y}+ \dots+ K^{d,Y}$
assuming that $L^{j,X} \in [a_j, b_j]$ and $L^{j,Y} \in [a_{j+d}, b_{j+d}]$.
It is easy to show using the definition of $C^j(\delta_0)$'s that the
$2d$-dimensional volume of $\Lambda([a_1,b_1], \dots,
[a_{2d},b_{2d}])$ is bounded below by $c_1 \prod_{1\leq k \leq 2d}
(b_k - a_k)$, and bounded above by $c_2 \prod_{1\leq k \leq 2d}
(b_k - a_k)$. This implies that the distribution of $\bK$ has
a component with a strictly positive density on 
$\Lambda([0, 1/(2d)], [0, 1/(2d)], \dots, [0, 1/(2d)])$.
Moreover, the claim holds conditional on the sigma field $\calF_{t_1}$. This and \eqref{jl28.1} imply that the distribution of $(B_{5t_1}, X_{5t_1}, Y_{5t_1})$ has a component with a strictly positive density on 
$\calB(u_1, 1/(4d)) \times \calB(x_1, 1/(4d)) \times \calB(y_1, 1/(4d)) $.
\end{proof}

\begin{proof}[Proof of Theorem \ref{j26.3} (i)] If there were more than one invariant measure, at
least two of them (say, $\mu$ and $\nu$) would be mutually
singular by Birkhoff's ergodic theorem \cite{Sin}. However,
we have shown in Lemma \ref{jl28.2} that there exists a strictly positive
measure $\psi$ which is absolutely continuous with respect to
any transition probability, so that in particular, $\psi \ll
\mu$ and $\psi \ll \nu$. Since $\mu \perp \nu$ by assumption,
there exists a set $\Gamma$ such that $\mu(\Gamma) = 0$ and $\nu(\Gamma^c) =
0$. Therefore, one must have $\psi(\Gamma) = \psi(\Gamma^c) = 0$ which
contradicts the fact that the measure $\psi$ is non-zero.
\end{proof}

\section{Stationary measure}\label{stationary}

\begin{proof}[Proof of Theorem \ref{j26.3} (ii)]

For a measure $\mu$ and function $f$, let $\mu(f)$ denote the integral of $f$ with respect to $\mu$.
Fix a continuous non-negative function $f: (\calT^d_1)^2 \to \R$ and note that $f$ is bounded, by compactness of $(\calT^d_1)^2$. 

Let $W$ denote Brownian motion on $(\calT^d_1)^2$ with the covariance matrix equal to  the unit diagonal matrix times $ 2 /( (d-1 ) d) $  and let $\E^w$ be the corresponding expectation, assuming that $W_0=w$. 
Standard coupling methods show that $W$ converges to the stationary distribution uniformly in $w$, that is, for every $\eps>0$ there exists $t_0$ such that for all $t\geq t_0$ and all $w \in (\calT^d_1)^2$, the Prokhorov distance between the distribution of $W_t$ and the uniform distribution on $(\calT^d_1)^2$ is less than $\eps$.

Fix an arbitrarily small $\eps_1>0$. 
By convergence of $W$ to the stationary distribution and the ergodic theorem, there exists $t_1$ so large that for any $w \in (\calT^d_1)^2$,
\begin{align}\label{jl29.1}
&\left|\E^{w} \left(\frac 1 {t_1} \int_0^{t_1} f(W_s) ds\right)
- (\nu^d_1 \times \nu^d_1)(f)\right|\\
&=
\left|\E^{w} \left(\frac 1 {t_1} \int_0^{t_1} f(W_s) ds\right)
- \lim_{t\to \infty} \frac 1 t \int_0^t f(W_s) ds\right| < \eps_1 /2. \nonumber
\end{align}

Fix $t_1$ satisfying the above estimate. Let
$\E^{z,x,y}$ denote the expectation corresponding to the distribution of $(B_t, X_t, Y_t)$ defined on $(\calT^d_r)^3$
starting from $(z,x,y)$. 
Recall from the last paragraph of Step 1 of the proof of Theorem \ref{j26.2} that the convergence of  $\{ n^{-1/2} (\fX^r_{ \sigma_{nt}} -\fX^r_0, \fY^r_{\sigma_{nt}} - \fY^r_0), t\in [0,t_1] \}$ to $\{W_t,  t\in [0,t_1] \}$ is uniform in the starting points of $X,Y$ and $B$.
It follows that there exists $r_1$ such that for all $r\geq r_1$, $z=B_0\in \calT^d_1$, and $(x,y)= w \in (\calT^d_1)^2$ such that  $|rx-rz|\geq 1$, $|rz-ry|\geq 1$ and $|rx-ry|\geq 2$, we have
\begin{align*}
\left|\E^{rz,rx,ry} \left( \frac 1 {t_1} \int_0^{t_1} f((X_{\sigma_{r^2 s}}, Y_{\sigma_{r^2 s}})/r) ds \right)
- \E^{w}\left( \frac 1 {t_1}  \int_0^{t_1} f(W_s) ds\right)\right| < \eps_1 /2.
\end{align*}
 The last estimate and \eqref{jl29.1} imply that
\begin{align*}
\left|\E^{rz,rx,ry} \left( \frac 1 {t_1} \int_0^{t_1} f((X_{\sigma_{r^2 s}}, Y_{\sigma_{r^2 s}})/r) ds \right)
- (\nu^d_1 \times \nu^d_1)(f)\right| < \eps_1 .
\end{align*}
By the Markov property applied at times $j t_1$, $j=1,2, \dots$, we obtain for any $k\geq 1$,
\begin{align} \label{jl29.2}
\left|\E^{rz,rx,ry} \left( \frac 1 {kt_1} \int_0^{kt_1} f((X_{\sigma_{r^2 s}}, Y_{\sigma_{r^2 s}})/r) ds \right)
- (\nu^d_1 \times \nu^d_1)(f)\right| < \eps_1 .
\end{align}
By Theorem \ref{j26.3} (i) and the ergodic theorem, the following limit exists a.s.,
\begin{align*}
\lim_{t\to \infty} \frac 1 t \int_0^t f((X_{\sigma_{r^2 s}}, Y_{\sigma_{r^2 s}})/r) ds
=
\lim_{k\to \infty}
\frac 1 {kt_1} \int_0^{kt_1} f((X_{\sigma_{r^2 s}}, Y_{\sigma_{r^2 s}})/r) ds ,
\end{align*}
so \eqref{jl29.2} and the Fatou lemma imply that 
\begin{align}\label{au12.1}
\E^{rz,rx,ry} \lim_{t\to \infty} \frac 1 t \int_0^t f((X_{\sigma_{r^2 s}}, Y_{\sigma_{r^2 s}})/r) ds \leq (\nu^d_1 \times \nu^d_1)(f) + \eps_1 .
\end{align}
Let $c_f = \sup _{x\in (\calT^r_1)^2} f(x)$. Then we can apply the same argument to the function $c_f - f(x)$ to see that 
\begin{align*}
\E^{rz,rx,ry} \lim_{t\to \infty} \frac 1 t \int_0^t (c_f- f((X_{\sigma_{r^2 s}}, Y_{\sigma_{r^2 s}})/r)) ds \leq (\nu^d_1 \times \nu^d_1)(c_f-f) + \eps_1 ,
\end{align*}
and, therefore,
\begin{align*}
\E^{rz,rx,ry} \lim_{t\to \infty} \frac 1 t \int_0^t f((X_{\sigma_{r^2 s}}, Y_{\sigma_{r^2 s}})/r) ds \geq (\nu^d_1 \times \nu^d_1)(f) - \eps_1 .
\end{align*}
We let $\eps_1$ go to 0 (and $r\to \infty$) in the last formula and \eqref{au12.1} to see that stationary distributions for the processes $(X_{\sigma_{r^2 t}}, Y_{\sigma_{r^2 t}})/r$ converge to
$\nu^d_1 \times \nu^d_1$.
This proves the theorem for processes run with the local time clock. We will show how this result implies the result for the processes run with the usual clock.

We will use results on excursion laws proved in Lemma \ref{au4.1}. There are two differences between the setup in that lemma and in the present proof. First, 
Lemma \ref{au4.1} contains estimates for lifetimes of excursion laws using the exit system for reflected Brownian motion in a domain with fixed holes. In the present context, the holes can move but this does not affect the validity of the estimates because the holes do not move during the lifetime of a single excursion. The second difference is that Lemma \ref{au4.1} is concerned with excursions of a single reflected Brownian motion. In the present context, the relevant Markov process is the vector $(B,X,Y)$ of which reflected Brownian motion is just one component. Hence, strictly speaking, we have to consider excursions of $(B,X,Y)$ from the set  $\{(b,x,y) \in (\calT^d_r)^3:
b \in \calS(x,1) \hbox{  or  } b \in \calS(y,1)\}$. It is easy to see that the estimates for the lifetime of an excursion derived in Lemma \ref{au4.1}
remain valid in the present context.

The exit system formula \eqref{exitsyst} and Lemma \ref{au4.1} (i) show that on the local time scale, the usual time is a jump process with the jump measure with finite expectation, bounded uniformly by a constant multiple of $r^d$. Part (ii) of Lemma \ref{au4.1} shows that outside a small set in the state space (small in the sense of having small Lebesgue measure relative to the measure of $\calT^d_r$), for sufficiently large $r$, the expectation of the jump measure is arbitrarily close to $r^d/s_d$. This implies that for any two subsets of the state space $(\calT^d_r)^3$, the ratio of the times spent by the process $(B,X,Y)$ in these sets in the long run will be the same as the ratio of local times spent by the process $(B,X,Y)$ in these sets. This observation and the fact that we have proved the theorem for the local time scale show that the theorem is true for the usual time scale.
\end{proof}

\section{Acknowledgments}

I am grateful to Jim Pitman and John Sylvester for very helpful advice.

\bibliographystyle{alpha}
\bibliography{twoboxes}

\begin{thebibliography}{BBCH10}

\bibitem[Ahl78]{Ahlf}
Lars~V. Ahlfors.
\newblock {\em Complex analysis}.
\newblock McGraw-Hill Book Co., New York, third edition, 1978.
\newblock An introduction to the theory of analytic functions of one complex
  variable, International Series in Pure and Applied Mathematics.

\bibitem[Arm72]{Arm}
D.~H. Armitage.
\newblock Further results on the global integrability of superharmonic
  functions.
\newblock {\em J. London Math. Soc. (2)}, 6:109--121, 1972.

\bibitem[Bas95]{Bassbook}
Richard~F. Bass.
\newblock {\em Probabilistic techniques in analysis}.
\newblock Probability and its Applications (New York). Springer-Verlag, New
  York, 1995.

\bibitem[BBCH10]{BBCH}
Richard~F. Bass, Krzysztof Burdzy, Zhen-Qing Chen, and Martin Hairer.
\newblock Stationary distributions for diffusions with inert drift.
\newblock {\em Probab. Theory Related Fields}, 146(1-2):1--47, 2010.

\bibitem[BCJ06]{BCJ}
Krzysztof Burdzy, Zhen-Qing Chen, and Peter Jones.
\newblock Synchronous couplings of reflected {B}rownian motions in smooth
  domains.
\newblock {\em Illinois J. Math.}, 50(1-4):189--268 (electronic), 2006.

\bibitem[BCP13]{BCP}
Krzysztof Burdzy, Zhen-Qing Chen, and Soumik Pal.
\newblock Brownian earthworm.
\newblock {\em Ann. Probab.}, 41(6):4002--4049, 2013.

\bibitem[Bil68]{Bill}
Patrick Billingsley.
\newblock {\em Convergence of probability measures}.
\newblock John Wiley \& Sons, Inc., New York-London-Sydney, 1968.

\bibitem[BK98]{BKhosh}
Krzysztof Burdzy and Davar Khoshnevisan.
\newblock Brownian motion in a {B}rownian crack.
\newblock {\em Ann. Appl. Probab.}, 8(3):708--748, 1998.

\bibitem[BTW89]{BTW}
Krzysztof Burdzy, Ellen~H. Toby, and Ruth~J. Williams.
\newblock On {B}rownian excursions in {L}ipschitz domains. {II}. {L}ocal
  asymptotic distributions.
\newblock In {\em Seminar on {S}tochastic {P}rocesses, 1988 ({G}ainesville,
  {FL}, 1988)}, volume~17 of {\em Progr. Probab.}, pages 55--85. Birkh\"auser
  Boston, Boston, MA, 1989.

\bibitem[Bur85]{KBcones}
Krzysztof Burdzy.
\newblock Brownian paths and cones.
\newblock {\em Ann. Probab.}, 13(3):1006--1010, 1985.

\bibitem[Bur87]{B2}
K.~Burdzy.
\newblock {\em Multidimensional {B}rownian excursions and potential theory},
  volume 164 of {\em Pitman Research Notes in Mathematics Series}.
\newblock Longman Scientific \& Technical, Harlow, 1987.

\bibitem[DWY98]{DWY}
R.~A. Doney, J.~Warren, and M.~Yor.
\newblock Perturbed {B}essel processes.
\newblock In {\em S\'eminaire de {P}robabilit\'es, {XXXII}}, volume 1686 of
  {\em Lecture Notes in Math.}, pages 237--249. Springer, Berlin, 1998.

\bibitem[JS03]{JS}
Jean Jacod and Albert~N. Shiryaev.
\newblock {\em Limit theorems for stochastic processes}, volume 288 of {\em
  Grundlehren der Mathematischen Wissenschaften [Fundamental Principles of
  Mathematical Sciences]}.
\newblock Springer-Verlag, Berlin, second edition, 2003.

\bibitem[LS84]{LS}
P.-L. Lions and A.-S. Sznitman.
\newblock {Stochastic differential equations with reflecting boundary
  conditions}.
\newblock {\em Comm. Pure Appl. Math.}, 37(4):511--537, 1984.

\bibitem[Mai75]{M}
Bernard Maisonneuve.
\newblock Exit systems.
\newblock {\em Ann. Probability}, 3(3):399--411, 1975.

\bibitem[Pas02]{Pas}
Mihai~N. Pascu.
\newblock Scaling coupling of reflecting {B}rownian motions and the hot spots
  problem.
\newblock {\em Trans. Amer. Math. Soc.}, 354(11):4681--4702 (electronic), 2002.

\bibitem[PS78]{PS}
Sidney~C. Port and Charles~J. Stone.
\newblock {\em Brownian motion and classical potential theory}.
\newblock Academic Press [Harcourt Brace Jovanovich, Publishers], New
  York-London, 1978.
\newblock Probability and Mathematical Statistics.

\bibitem[Sin94]{Sin}
Ya.~G. Sina{\u\i}.
\newblock {\em Topics in ergodic theory}, volume~44 of {\em Princeton
  Mathematical Series}.
\newblock Princeton University Press, Princeton, NJ, 1994.

\end{thebibliography}

\end{document}